%% Modified from bare_conf.tex on 06-12-04
%% V1.2
%% 2002/11/18
%% by Michael Shell
%% mshell@ece.gatech.edu
%%
%
%\documentclass[10pt, conference]{IEEEtran}
%\documentclass[12pt, draftcls, conference,onecolumn]{IEEEtran}
\documentclass{colt2020}

%\IEEEoverridecommandlockouts
% If the IEEEtran.cls has not been installed into the LaTeX system files,
% manually specify the path to it:
% \documentclass[conference]{IEEEtran}
%
\usepackage{times}

\usepackage{xspace,exscale,relsize}
\usepackage{fancybox,shadow}

\usepackage{color}
% this is for \mathfrak & better working \mathbb
\usepackage{amsfonts}
% this is for \gtrsim, \lesssim symbols (redefined as \simleq, \simgeq in my_style.sty)

%\usepackage{url}
\usepackage[title]{appendix}

%automatic sorting/compression of citations
% noadjust -- to prevent automatic adding of ~ before \cite{}
%\usepackage[noadjust]{cite}

% amsmath is for align, multline env's.
%  -- cmex10 option directs TeX to use rescaled cmex10 for cmex7-9 fonts.
%  	This option is highly recommended by IEEEtran because older systems
%  	did not include Type1 font definitions for cmex7-9 fonts. However, on my
%  	systems it seems that DVIPS is instructed by 
%  	/usr/share/texmf-texlive/fonts/map/dvips/cmex/ttcmex.map:
%  		cmex7 TeX-cmex7 <fmex7.pfb
%		cmex8 TeX-cmex8 <fmex8.pfb
%		cmex9 TeX-cmex9 <fmex9.pfb
%	Thus, I _DO NOT_ need cmex10 option
%\usepackage[cmex10]{amsmath}

% AMSMath was so ample as to prohibit usage of \over, so we undo it
\makeatletter
\let\over=\@@over \let\overwithdelims=\@@overwithdelims
\let\atop=\@@atop \let\atopwithdelims=\@@atopwithdelims
\let\above=\@@above \let\abovewithdelims=\@@abovewithdelims
\makeatother
% Also, note that the amsmath package sets \interdisplaylinepenalty to 10000
% thus preventing page breaks from occurring within multiline equations. Use:
\interdisplaylinepenalty=10000
% after loading amsmath to restore such page breaks as IEEEtran.cls normally
% does. 

\usepackage{rotating}

%\usepackage{stfloats}
% stfloats.sty was written by Sigitas Tolusis. This package gives LaTeX2e
% the ability to do double column floats at the bottom of the page as well
% as the top. (e.g., "\begin{figure*}[!b]" is not normally possible in
% LaTeX2e). It also provides a command:
%\fnbelowfloat
% to enable the placement of footnotes below bottom floats (the standard
% LaTeX2e kernel puts them above bottom floats). This is an invasive package
% which rewrites many portions of the LaTeX2e float routines. It may not work
% with other packages that modify the LaTeX2e float routines. The latest
% version and documentation can be obtained at:
% http://www.ctan.org/tex-archive/macros/latex/contrib/sttools/
% Documentation is contained in the stfloats.sty comments as well as in the
% presfull.pdf file. Do not use the stfloats baselinefloat ability as IEEE
% does not allow \baselineskip to stretch. Authors submitting work to the
% IEEE should note that IEEE rarely uses double column equations and
% that authors should try to avoid such use. Do not be tempted to use the
% cuted.sty or midfloat.sty packages (also by Sigitas Tolusis) as IEEE does
% not format its papers in such ways.

% correct bad hyphenation here
%\hyphenation{op-tical net-works semi-conduc-tor}

%
\usepackage{ifpdf}

\usepackage{psfrag}

\usepackage{prettyref,enumerate}

\usepackage{tikz}
\usetikzlibrary{arrows}
\tikzstyle{int}=[draw, fill=blue!20, minimum size=2em]
\tikzstyle{dot}=[circle, draw, fill=blue!20, minimum size=2em]
\tikzstyle{init} = [pin edge={to-,thin,black}]

\iffalse
\usepackage[%dvips,
CJKbookmarks=true,
bookmarksnumbered=true,
bookmarksopen=true,
%						bookmarks=false,
colorlinks=true,
citecolor=red,
linkcolor=blue,
anchorcolor=red,
urlcolor=blue,
pdfauthor={Polyanskiy and Wu}
]{hyperref}
\fi
\usepackage[all]{xy}

\usepackage{mathtools}

%%%%%%%%%%%%%%%%%%%%%%%%%%%%%%%%%%%%%%%%%%%%%%%%%%%%%%%%%%%%%%%%%%
%%%%%%%%%%%%%%%%%%%%%%%%%%%%%%%%%%%%%%%%%%%%%%%%%%%%%%%%%%%%%%%%%%
%%%%                                                          %%%%
%%%%                      Local defines                       %%%%
%%%%                                                          %%%%
%%%%%%%%%%%%%%%%%%%%%%%%%%%%%%%%%%%%%%%%%%%%%%%%%%%%%%%%%%%%%%%%%%
%%%%%%%%%%%%%%%%%%%%%%%%%%%%%%%%%%%%%%%%%%%%%%%%%%%%%%%%%%%%%%%%%%

 %%SV-apr26
 %%SV-apr26

\newcommand{\mreals}{\ensuremath{\mathbb{R}}}

\ifx\eqref\undefined
\newcommand{\eqref}[1]{~(\ref{#1})}
\fi
\ifx\mod\undefined
\def\mod{\mathop{\rm mod}}
\fi

\usepackage{bm}

\def\argmin{\mathop{\rm argmin}}

\def\exp{\mathop{\rm exp}}

\def\EE{\Expect}

\def\Var{\mathrm{Var}}

\def\PP{\mathbb{P}}

\def\eqdef{\triangleq}

\def\simiid{\stackrel{iid}{\sim}}

%\def\co{{\mathrm{co}}}

%%%%%%%%%%%%%%%%%%%%%%%%%%%% by Wu %%%%%%%%%%%%%%%%%%%%%%%%%%%%
\newcommand{\stepa}[1]{\overset{\rm (a)}{#1}}
\newcommand{\stepb}[1]{\overset{\rm (b)}{#1}}
\newcommand{\stepc}[1]{\overset{\rm (c)}{#1}}
\newcommand{\stepd}[1]{\overset{\rm (d)}{#1}}

\newcommand{\Poi}{\mathrm{Poi}}

\newcommand{\naturals}{\mathbb{N}}
\newcommand{\integers}{\mathbb{Z}}
\newcommand{\complex}{\mathbb{C}}
\newcommand{\Expect}{\mathbb{E}}
\newcommand{\expect}[1]{\mathbb{E}\left[#1\right]}
\newcommand{\Prob}{\mathbb{P}}
\newcommand{\prob}[1]{\mathbb{P}\left[#1\right]}
\newcommand{\pprob}[1]{\mathbb{P}[#1]}

\newcommand{\TV}{{\rm TV}}

\newcommand{\diff}{{\rm d}}

\newcommand{\iid}{i.i.d.\xspace}
\newcommand{\ind}{ind.\xspace}

\newcommand{\fracd}[2]{\frac{\diff #1}{\diff #2}}

%% parenthesis
\newcommand{\pth}[1]{\left( #1 \right)}
\newcommand{\qth}[1]{\left[ #1 \right]}
\newcommand{\sth}[1]{\left\{ #1 \right\}}

\newcommand{\iiddistr}{{\stackrel{\text{\iid}}{\sim}}}
\newcommand{\inddistr}{{\stackrel{\text{\ind}}{\sim}}}

\newcommand{\Binom}{\text{Binom}}
\newcommand{\Bino}{\text{Binom}}

\newcommand{\indc}[1]{{\mathbf{1}_{\left\{{#1}\right\}}}}

\definecolor{myblue}{rgb}{.8, .8, 1}
\definecolor{mathblue}{rgb}{0.2472, 0.24, 0.6} % mathematica's Color[1, 1--3]
\definecolor{mathred}{rgb}{0.6, 0.24, 0.442893}
\definecolor{mathyellow}{rgb}{0.6, 0.547014, 0.24}

\newcommand{\calN}{{\mathcal{N}}}

\newcommand{\calP}{{\mathcal{P}}}

\newcommand{\calT}{{\mathcal{T}}}

\newcommand{\calX}{{\mathcal{X}}}

%\newcommand{\baro}{{\bar{o}}}

% for prettyref
\newrefformat{eq}{(\ref{#1})}
\newrefformat{thm}{Theorem~\ref{#1}}
\newrefformat{th}{Theorem~\ref{#1}}
\newrefformat{chap}{Chapter~\ref{#1}}
\newrefformat{sec}{Section~\ref{#1}}
\newrefformat{algo}{Algorithm~\ref{#1}}
\newrefformat{fig}{Fig.~\ref{#1}}
\newrefformat{tab}{Table~\ref{#1}}
\newrefformat{rmk}{Remark~\ref{#1}}
\newrefformat{clm}{Claim~\ref{#1}}
\newrefformat{def}{Definition~\ref{#1}}
\newrefformat{cor}{Corollary~\ref{#1}}
\newrefformat{lmm}{Lemma~\ref{#1}}
\newrefformat{prop}{Proposition~\ref{#1}}
\newrefformat{pr}{Proposition~\ref{#1}}
\newrefformat{app}{Appendix~\ref{#1}}
\newrefformat{apx}{Appendix~\ref{#1}}
\newrefformat{ex}{Example~\ref{#1}}
\newrefformat{exer}{Exercise~\ref{#1}}
\newrefformat{soln}{Solution~\ref{#1}}
\newrefformat{ineq}{inequality~(\ref{#1})}
%%%%%%%%%%%%%%%%%%%%%%%%%%%% by Wu %%%%%%%%%%%%%%%%%%%%%%%%%%%%

\def\unifto{\mathop{{\mskip 3mu plus 2mu minus 1mu%
			\setbox0=\hbox{$\mathchar"3221$}%
			\raise.6ex\copy0\kern-\wd0%
			\lower0.5ex\hbox{$\mathchar"3221$}}\mskip 3mu plus 2mu minus 1mu}}

%% These two are defined in amssymb package
\ifx\lesssim\undefined
\def\simleq{{{\mskip 3mu plus 2mu minus 1mu%
			\setbox0=\hbox{$\mathchar"013C$}%
			\raise.2ex\copy0\kern-\wd0%
			\lower0.9ex\hbox{$\mathchar"0218$}}\mskip 3mu plus 2mu minus 1mu}}
\else
\def\simleq{\lesssim}
\fi

\ifx\gtrsim\undefined
\def\simgeq{{{\mskip 3mu plus 2mu minus 1mu%
			\setbox0=\hbox{$\mathchar"013E$}%
			\raise.2ex\copy0\kern-\wd0%
			\lower0.9ex\hbox{$\mathchar"0218$}}\mskip 3mu plus 2mu minus 1mu}}
\else
\def\simgeq{\gtrsim}
\fi

% beautiful system of 2 alternatives preceded by {

% Macro to show off newstuff to authors

\iffalse
\newtheorem{theorem}{Theorem}
\newtheorem{lemma}[theorem]{Lemma}

\theoremstyle{definition}

\newtheorem{remark}{Remark}
\fi

\newif\ifmapx
{\catcode`/=0 \catcode`\\=12/gdef/mkillslash\#1{#1}}
\edef\jobnametmp{\expandafter\string\csname urn_tv2_apx\endcsname}
\edef\jobnameapx{\expandafter\mkillslash\jobnametmp}
\edef\jobnameexpand{\jobname}
\ifx\jobnameexpand\jobnameapx
\mapxtrue
\else
\mapxfalse
\fi

\long\def\apxonly#1{\ifmapx{\color{blue}#1}\fi}
%%%%%%%%%%%%%%%%%%%%%%%%%%%%%%%%%%%%%%%%%%%%%%%%%%%%%%%%%%%%%%%%
%%%%%%%%%%%% CONDITIONAL COMPILATION TRICK %%%%%%%%%%%%%%%%%%%%%
%%%%%%%%%%%% Usage: \ifmapx TEXT \fi 
%%%%%%%%%%%% The "TEXT" will only appear if *_apx.tex is compiled   
%%%%%%%%%%%%%%%%%%%%%%%%%%%%%%%%%%%%%%%%%%%%%%%%%%%%%%%%%%%%%%%%
%
% Ok, all this pain is to produce the name "xxx_apx" with characters
% of category 12, since this is what jobname expands to. What I used is the
% trick that \string\abc produces a sequence of tokens (\,a,b,c) all of category 12, 
% so I only need to kill the front \_12 which is what mkillslash macro doing.
%
%%%%%%%%%%%%

%%%%%%%%%%%%%%%%%%%%%%%%%%%%%%%%%%%%%%%%%%%%%%%%%%%%%%%%%%%%%%%%%%
%%%%%%%%%%%%%%%%%%%%%%%%%%%%%%%%%%%%%%%%%%%%%%%%%%%%%%%%%%%%%%%%%%
%%%%                                                          %%%%
%%%%                Document begins here                      %%%%
%%%%                                                          %%%%
%%%%%%%%%%%%%%%%%%%%%%%%%%%%%%%%%%%%%%%%%%%%%%%%%%%%%%%%%%%%%%%%%%
%%%%%%%%%%%%%%%%%%%%%%%%%%%%%%%%%%%%%%%%%%%%%%%%%%%%%%%%%%%%%%%%%%

\newcommand{\deltaTV}{\delta_{\TV}}

%\renewcommand{\cite}{\citep}

% paper title
\title{Extrapolating the profile of a finite population}
% alternative titles
%\title{Estimating functionals: Why Le Cam's method always works?}

\coltauthor{\Name{Soham Jana} \Email{soham.jana@yale.edu} \\ 
	\addr Department of Statistics and Data Science, Yale University, New Haven, CT
\\ \AND 
	\Name{Yury Polyanskiy} \Email{yp@mit.edu} \\
	\addr Department of EECS, MIT, Cambridge, MA  
	\AND 
	\Name{Yihong Wu} \Email{yihong.wu@yale.edu} \\
	\addr Department of Statistics and Data Science, Yale University, New Haven, CT
}
%\affil{MIT, UIUC}

\begin{document}
	\ifpdf
	\DeclareGraphicsExtensions{.pgf}
	\graphicspath{{figures/}{plots/}}
	\fi
	
	\maketitle
	
	\begin{abstract} We study a prototypical problem in empirical Bayes. Namely,
		consider a population consisting of $k$ individuals each belonging to one of $k$ types (some types can be empty).
		Without any structural restrictions, it is impossible to learn the 
		composition of the full population having observed only a small (random) subsample of size $m = o(k)$. 
		Nevertheless, we show that in the sublinear regime of $m =\omega(k/\log k)$, it is possible to consistently estimate in total variation 
		the \emph{profile} of the population, defined as the empirical
		distribution of the sizes of each type, which determines many symmetric properties of the population. We also prove
		that in the linear regime of $m=c k$ for any constant $c$ the optimal rate is $\Theta(1/\log k)$. 
		Our estimator is based on Wolfowitz's minimum distance method, which entails solving a linear program (LP) of size $k$.
		We show that there is a single infinite-dimensional LP whose value simultaneously characterizes the risk of the minimum distance estimator and certifies its minimax optimality.
		The sharp convergence rate is obtained by evaluating this LP using complex-analytic techniques. 	
	\end{abstract}

\begin{keywords}
	High-dimensional statistics, empirical Bayes, sublinear algorithms, minimax rate, $H^\infty$-relaxation, Laguerre polynomials.
\end{keywords}

		\section{Introduction} 
	\label{sec:intro}
	
	%We study the following canonical sampling problem. 
	Consider a finite population, say, an urn of at most $k$ colored balls, 
	with colors indexed by, without loss of generality, $[k]\triangleq\{1,\ldots,k\}$.
	%colors ranging in $\calX=[k]\triangleq\{1,\ldots,k\}$.
	Let $\theta_j$ denote the the number of balls of color $j\in [k]$ present in the urn.
	We observe a subsample, obtained by revealing each ball independently with probability $p$. This sampling scheme is referred to as the Bernoulli sampling model \cite{BF93}, a specific form of sampling without replacements. 
	%consider the sampling scheme where each ball is observed independently with probability $p$; this sample model is referred to as Bernoulli sampling \cite{BF93,WY16-distinct}, a specific form of sampling without replacements. 
	We will be interested in both the \emph{linear} and the \emph{sublinear} regime, in which the sampling probability $p$ is a small constant or vanishing as $k$ grows, respectively.
	
	It is not hard to show (see \prettyref{app:mu}) that unless all but a vanishing fraction of the urn is observed, it is impossible to consistently estimate the empirical distribution of the colors, which aligns with the conventional wisdom that the sample size needs to exceed the number of parameters. Fortunately, many interesting properties about the population (such as entropy, number of distinct elements) are label-invariant and hence learnable through the \emph{profile} of the population \cite{orlitsky2005convergence}, 
	%which we define as the following probability distribution $\pi$:
	defined as the empirical distribution of $\theta=(\theta_1,\ldots,\theta_k)$:
	%which are linear functionals of and therefore learnable through the profile $\pi$.	
	%The profile of the urn is defined as the following distribution $\pi$ on the set of non-negative integers given by
	\begin{align}\label{eq:composition}
	 %\begin{split}
	 %\pi_{m}&= {1\over k} \sum_{j\in \calX} \indc{\theta_j=m},\quad m=1,\dots,k\\  
	 %\pi_{0}&=1-\sum_{m=1}^k \pi_{m}.
	 %\end{split}
	 \pi = {1\over k}\sum_{j=1}^k \delta_{\theta_j}.
	\end{align}
	where $\delta_m$ denotes the Dirac measure (point mass) at $m$,
	Note that $\pi$ is supported on $\{0,\ldots,k\}$ with mean at most one and probability mass function given by $\pi_m = {1\over k} \sum_{j=1}^k \indc{\theta_j=m} $ for $m=0,\dots,k$.
	The profile provides information about the diversity of a population. For example, $\pi=(1-\frac{1}{k})\delta_0+\frac{1}{k}\delta_k$ and $\pi=\delta_1$ correspond to 
the two extremes of all balls having the same color and different colors, respectively.  Furthermore, $\pi_0$ encodes the total number $c$ of distinct colors in urn, since $\pi_{0} = 1-c/k$. 
%Estimating $c$ is known as the distinct elements problem, which has been extensive studied in the literature \cite{BF93,CCMN00,BJKST02,RRSS09,VV11,WY16-distinct}.

	Based on the subsampled population, our goal is to reconstruct the profile $\pi$ of the full population. 
	Since many symmetric properties can be expressed as its linear functionals, estimating $\pi$ under the total variation (TV) distance allows simultaneous estimation of all such bounded properties. 
	Our main result is that the profile can be estimated consistently even in the sublinear regime.

	Let $X_j\sim \Bino(\theta_j,p)$ be the number of observed balls of color $j$. The minimax TV risk of estimating $\pi$ is defined as
	%More explicitly, given $(\theta_j)$ let $X_j\sim \Bino(\theta_j,p)$ be the number of observed balls of color $j$. Define the following minimax risk:
	\begin{align} 
	R(k) = \inf \sup   \EE[\| \pi - \hat \pi\|_{\TV}]. 
	\label{eq:Rk}
	\end{align}
    where 
		$\| \pi - \hat \pi\|_{\TV} \eqdef \frac{1}{2} \sum_{m\geq 0} |\pi_{m} - \hat \pi_{m}|$, 
		the supremum is over all urns of at most $k$ balls, and the infimum is over all estimators $\hat{\pi}$ as a function of $X=(X_1,\ldots,X_k)$. Our main result is the following.
	
	\begin{theorem}\label{thm:main} 
		There exist absolute constants $c,C,d_0$, such that if $\log k\geq {d_0 \over \bar p}$, then
		\begin{equation}\label{eq:main}
		\min\sth{{\bar p \over p},\sqrt{\log k}}{c\over \log k} \leq R(k) \le \min \sth{{C\over p \log k},1},
		\end{equation}	
		 where $\bar p=1-p$. Furthermore, the upper bound in fact holds for all $p\in(0,1)$, achieved by a minimum-distance estimator computable in polynomial time.
	\end{theorem}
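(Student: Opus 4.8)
Both bounds will be reduced to a single extremal quantity, the ``modulus of continuity'' of the binomial-mixture map restricted to valid profiles. Write $T_p$ for the linear operator sending a profile $\nu$ on $\{0,\dots,k\}$ to the law of one observation, $T_p\nu=\sum_m \nu_m\,\Binom(m,p)$, and let $\calP$ be the set of valid profiles (probability measures on $\{0,\dots,k\}$ of mean at most $1$). For a suitably chosen metric $\rho$ on laws over $\integers_{\ge0}$, the core object is the linear program
\[
V(\delta)\;\triangleq\;\sup\bigl\{\,\|\nu_1-\nu_2\|_{\TV}\;:\;\nu_1,\nu_2\in\calP,\ \rho(T_p\nu_1,T_p\nu_2)\le\delta\,\bigr\}.
\]
The plan is: (i) control the risk of Wolfowitz's minimum-distance estimator from above by $V$ at the empirical-process error level; (ii) control the minimax risk from below by $V$ at essentially the same level, via the two-point method applied to priors over populations; and (iii) evaluate $V$ by complex-analytic means, which is where $1/(p\log k)$ emerges. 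By LP duality $V(\delta)$ also equals a minimization over ``test functions'' on the observation side, and it is this dual that at once bounds the estimator's risk and certifies its optimality --- the single infinite-dimensional LP of the abstract.

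\textbf{Upper bound.}
Let $\hat P=\frac1k\sum_{j=1}^k\delta_{X_j}$ and set $\hat\pi\in\argmin_{\nu\in\calP}\rho(T_p\nu,\hat P)$, with $\rho$ taken to be a truncated $\ell_1$ distance on $\{0,\dots,L\}$, $L\asymp p\log k$; since $\calP$ is a polytope and $\rho$ is piecewise linear, $\hat\pi$ is computed by an LP of size $O(k)$, which gives the ``polynomial-time'' claim. As the $X_j$ are independent with $\frac1k\sum_j\mathrm{Law}(X_j)=T_p\pi$, a second-moment bound yields $\Expect[\rho(\hat P,T_p\pi)]\le\eta_k:=\sqrt{L/k}=k^{-1/2+o(1)}$; the truncation is harmless because $T_p\nu$ places mass $\le p/L\asymp 1/\log k$ on $\{x>L\}$ for every $\nu\in\calP$ (Markov). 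Feasibility of $\pi$ gives $\rho(T_p\hat\pi,T_p\pi)\le 2\eta_k$, hence $\|T_p\hat\pi-T_p\pi\|_{\TV}\lesssim \max\{\eta_k,p/L\}\asymp 1/\log k$ and therefore $\Expect\|\hat\pi-\pi\|_{\TV}\lesssim V(C/\log k)$; the bound $\le 1$ is trivial. It then remains to show $V(C/\log k)\lesssim 1/(p\log k)$.

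\textbf{Lower bound.}
I would draw $\theta_1,\dots,\theta_k$ i.i.d.\ from a prior $P_0$ or $P_1$, each in $\calP$ with support in $\{0,\dots,k\}$. Then the empirical profile concentrates, $\|\pi-P_i\|_{\TV}\lesssim\sqrt{|\supp P_i|/k}$ with high probability, while conditionally the data are $k$ i.i.d.\ samples from $T_pP_i$; Le Cam's two-point method then gives $R(k)\gtrsim\|P_0-P_1\|_{\TV}-\sqrt{|\supp P_i|/k}$ whenever $k\cdot H^2(T_pP_0,T_pP_1)\lesssim 1$, with $H^2$ the squared Hellinger distance. Optimizing over $(P_0,P_1)$ reduces the minimax lower bound to lower-bounding a Hellinger variant of $V$ at noise level $\asymp 1/\sqrt k$; since $1/\sqrt k\le C/\log k$ this is consistent with the upper bound, and the degradation to $\min\{\bar p/p,\sqrt{\log k}\}\cdot c/\log k$ arises from the constraint $|\supp P_i|\le k$ (only $k$ balls exist, so $\theta_j\le k$), which limits how ill-conditioned a pair can be built.

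\textbf{Evaluating the LP (the crux).}
Passing to probability generating functions $g_\nu(z)=\sum_m\nu_m z^m$, the map $T_p$ becomes precomposition with the affine map $z\mapsto\bar p+pz$, which carries $\overline{\mathbb D}$ onto the sub-disk $D_p\triangleq\{w:|w-\bar p|\le p\}$, internally tangent to the unit circle at $w=1$. Thus for $\mu=\nu_1-\nu_2$ the function $g_\mu$ is a polynomial of degree $\le k$ with $g_\mu(1)=0$, $|g_\mu'(1)|\le 1$, $|g_\mu|\le 2$ on $\overline{\mathbb D}$, and ``$\delta$-small'' on $D_p$, while $\|\mu\|_{\TV}$ is a coefficient functional of $g_\mu$; $V(\delta)$ is the value of this extremal analytic problem, and I expect it to equal $\asymp 1/(p\log k)$ for every noise level $\delta$ in the plateau $k^{-O(1)}\le\delta\le c/\log k$, the $\log k$ being the log of the degree $k$ (a bounded-degree polynomial small on $D_p$ cannot be too large near $0$, by a Bernstein/Chebyshev-type estimate) and the $1/p$ coming from the tangency of $D_p$ at $w=1$. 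Concretely I would (a) drop the constraint $\deg g_\mu\le k$ to obtain an extremal problem over $H^\infty$ of the disk and solve it exactly from the conformal geometry of $D_p\subset\mathbb D$ together with a two-constants / Hadamard three-circles argument, and (b) in the lower-bound direction reinstate the degree-$k$ constraint by truncating the extremal $H^\infty$ function to a polynomial of degree $\le k$, bounding the error via the Laguerre-polynomial description of the singular system of $T_p$. The hardest part is exactly (a)--(b): showing the $H^\infty$ relaxation is essentially lossless and tracking the sharp $p$-dependence in both directions, with the polynomial truncation in (b) being what forces the extra $\min\{\bar p/p,\sqrt{\log k}\}$ factor in the lower bound. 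Finally, the hypothesis $\log k\ge d_0/\bar p$ ensures $D_p$ stays bounded away from the full disk, so $V$ is in its generic, log-ill-posed regime.
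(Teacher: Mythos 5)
Your overall architecture matches the paper's: a modulus-of-continuity LP for the binomial-mixture map, a minimum-distance estimator for the upper bound, a two-point/prior argument for the lower bound, and a complex-analytic evaluation via the horodisk $D_p=\bar p+pD$ and Laguerre polynomials. However, there is a genuine gap in the upper bound, located exactly where you place the noise level. You evaluate the LP at $\delta\asymp 1/\log k$ (forced by your truncation of $\rho$ to $\{0,\dots,L\}$ with $L\asymp p\log k$, which caps the achievable $\rho$-accuracy at the truncation error $p/L\asymp 1/\log k$), and you then assert $V(C/\log k)\lesssim 1/(p\log k)$, attributing the $\log k$ to the degree-$k$ constraint on the generating polynomial. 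This is not how the extremal problem behaves. The correct scaling, and the one the Hadamard three-lines/Cauchy-estimate argument actually delivers, is $V(\delta)\asymp 1/(p\log(1/\delta))$: interpolation between $\|g\|_{H^\infty(D)}\le 2$ and $\|g\|_{H^\infty(D_p)}\le\delta$ gives $|[z^\ell]g|\le 2^\ell\delta^{cp}$, the coefficient tail beyond index $J$ is controlled by $1/J$ via Markov from the \emph{mean} constraint $\sum_m m|\Delta_m|\le 2$ (not merely $|g'(1)|\le 1$, which can vanish by cancellation), and optimizing $J\asymp p\log(1/\delta)$ yields $1/(p\log(1/\delta))$. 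The support/degree bound $k$ never enters this computation; a Bernstein/Chebyshev estimate from smallness on $D_p$ alone would give growth like $\delta(\bar p/p)^k$ at the origin, which is useless. With your truncated metric the best you can conclude is $R(k)\lesssim 1/(p\log\log k)$, which falls short of the theorem.

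To close the gap you must get the LP argument down to noise level $\delta=k^{-\Theta(1)}$, which means controlling the \emph{untruncated} TV distance between the empirical measure $\hat\nu$ and $\pi P$. This is a nontrivial step that your sketch skips: the paper shows $\Expect\|\hat\nu-\pi P\|_{\TV}\lesssim\sqrt{\log k/k}$ by combining $\Var[\hat\nu_m]\le\nu_m/k$ with the Cauchy--Schwarz split $\sum_{m\ge1}\sqrt{\nu_m}\le(\sum m\nu_m)^{1/2}(\sum 1/m)^{1/2}=O(\sqrt{\log k})$, exploiting the mean constraint on $\pi$, plus a McDiarmid concentration step. Then $\log(1/\delta)\asymp\log k$ and $V(\delta)\lesssim 1/(p\log k)$ as required. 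Two smaller misattributions: the $\min\{\bar p/p,\sqrt{\log k}\}$ degradation in the lower bound does not come from the support constraint $\theta_j\le k$ --- it is already present in the infinite-dimensional LP over all mean-$\le1$ distributions on $\integers_+$ (it arises from the normalization needed to make the derivative of the Laguerre-type extremal function $e^{-\beta(1+z)/(1-z)}$ feasible); and the hypothesis $\log k\ge d_0/\bar p$ is needed so that the parameter $\beta$ in that construction is large enough for the Plancherel--Rotach asymptotics to apply, not to keep $D_p$ away from $D$ (it is always internally tangent at $1$).
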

	
	%\begin{remark}
	%\label{rmk:main}			
	In the linear regime, \prettyref{thm:main} shows that the optimal TV rate is $\Theta(\frac{1}{\log k})$ for any constant sampling probability $p$. 
	This should be contrasted with the estimation of $\pi_0$, known as the distinct elements problem, which has been extensive studied in the literature \cite{BF93,CCMN00,RRSS09,VV11,WY16-distinct}.
	The precise behavior of the minimax risk of estimating $\pi_0$ was determined in \cite{WY16-distinct}. In particular, if $\frac{1}{\log k} \lesssim p \lesssim 1$, 
	the optimal rate of $\pi_0$ is $k^{-\Theta(p)}$, much faster than estimating $\pi$ itself.
	Our result refines this observation and reveals the following dichotomy:
	the polynomial rate $k^{-\Theta(p)}$ holds not just for estimating $\pi_0$ but  for all $\pi_m$ with $m=o(\log k)$; however, for $m = \Theta(\log k)$, $\pi_m$ is much harder to estimate and the rate is no faster than $\Omega(\frac{1}{(\log k)^2})$. This explains the overall TV risk $\Omega(\frac{1}{\log k})$ for estimating the full distribution $\pi$.

		%Extension of distinct elements problem. 
	%$\pi_0$ can be estimated at polynomial rate if $p$ is a constant
	%\cite{WY16-distinct}
	%In contrast, 
	%The explanation is that for $m = \Theta(\log k)$, $\pi_m$ can be estimated at $\frac{1}{(\log k)}$.
	
	In the sublinear regime, \prettyref{thm:main} shows that consistent estimation is possible if $p = \omega(\frac{1}{\log k})$. Although our current lower bound does not conclude its optimality, it is indeed the case based on existing impossibility results of the distinct element problem that shows $\pi_0$ cannot be estimated with vanishing  error if  $p = O(\frac{1}{\log k})$ \cite{GV-thesis,WY16-distinct}.	
	%\end{remark}
	
	For simplicity, we focus on the Bernoulli sampling model in this paper. The results can be extended to models such as iid sampling or Poisson sampling by the usual simulation or reduction argument (cf.~\cite[Appendix A]{WY16-distinct}).

	\subsection{Related work}
	\label{sec:prior}
	
	While the precise question we are considering here was not studied before, there is a long history of related work. 
	First we observe that the goal of estimating functionals of $\theta=(\theta_1,\ldots,\theta_k)$ is a ``compound statistical decision problem'', in the language
	of~\cite{Robbins51}. %The discussion preceeding our theorem shows that minimax question is trivial: vanishing error in estimating
%	$\bftheta$ is not possible (in $\ell_1$-loss at least). 
%
	Instead of studying minimax risks of estimating $\theta$ or its functionals,~\cite{Robbins51} proposed an alternative goal
	(``subminimaxity''), which in our case can be rephrased as follows: construct an estimator which has vanishing excess risk (regret) over that of 
	the oracle estimator $\widehat{k_j}(X_j, \pi)$ having access to empirical distribution $\pi$ of $\theta$.
	The general recipe proposed in~\cite{Robbins51} (and later promulgated by~\cite{Robbins56} under the name of ``empirical
	Bayes''), may roughly be described as a two-step
	procedure: first, one produces an estimate $\hat \pi$ of $\pi$, and then, second, substitutes it into the oracle estimator 
	obtaining $\widehat{k_j}(X_j, \hat \pi)$. Thus, Robbins~\cite{Robbins51} asked (his Problem I) how well can the
	first step be done? Our work addresses this question.

%	We note that \cite{Robbins51} also defined Problem II: If $\theta_j \simiid G$, the goal is to estimate the prior $G$, rather than the empirical distribution of the vector ${\theta}$.
%	There is a sizable literature on problems of type II, whereas those of type I remained relatively underexplored. 

	The main part of our theorem characterizes how well the ``prior'' $\pi$ can be estimated. We mention that while
	empirical Bayes method is sometimes understood only as a way to derive estimates of a particular functional of the
	prior, as, for example, in the Good-Turing estimator for the number of unseen species, the idea of estimating the prior
	itself has also been proposed in~\cite{Robbins56,edelman1988estimation}. Furthermore, the solution advocated therein, Wolfowitz's \emph{minimum
	distance estimator} \cite{W57}, is the one we employ in the proof of our result.
	In this regard, one of the main contributions of the paper is showing that performance of the minimum distance
	estimators is characterized by means of a certain function $\deltaTV(t)$, defined as the value of an 
	infinite-dimensional {linear program}, which simultaneously can also be used to produce a matching \textit{lower
	bound}. This duality between the upper and the lower bound has previously been observed and operationalized in the context of
	estimating \emph{a single linear functional} in~\cite{JN09,PSW17-colt,PW18-dual2}. Here we extend this program to estimating the \emph{full distribution}, and evaluate the relevant $\deltaTV$ function using complex-analytic techniques.
	
	Arguably, the counterintuitive part of our result is the possibility of estimating the profile $\pi$ consistently in TV, despite the absence of structural assumptions on the urn configuration and despite $p$ possibly vanishing.
	%$\omega({1\over \log k})$. 
	In fact, this is a manifestation of the fascinating effect originally discovered by~\cite{orlitsky2005convergence} and further developed in \cite{valiant2013estimating,han2018local}, namely, although there exists no consistent estimator of the empirical color distribution, its sorted version can be estimated consistently.
	Nevertheless, the best upper bound that can be extracted (see \prettyref{app:sorted} for details) 	from existing results is $O(\frac{1}{\sqrt{\log k}})$ in the linear regime and there is no applicable lower bound. 
	\prettyref{thm:main} shows that this rate is suboptimal by a square root factor, potentially due to the fact that these previous work did not exploit the finiteness of the population.
	%\nb{YP check this paragraph.}
	
		In terms of techniques, while the approach of \cite{WY16-distinct} to the distinct elements problem relies on polynomial interpolation and approximation, both the scheme (minimum distance estimator) and the lower bound in the present paper involve linear programming (LP), which is more akin in spirit to the work of \cite{VV11,PW18-dual2}. The technical novelty here is that we use tools from complex analysis to analyze the behavior of the LP.	

	Finally, we mention that a different line of research tracing back to \cite{lord1969estimating} studies the ``mirror image'' of our problem: estimating the empirical distribution of parameters 
	%$p_j$ 
	$p_1,\ldots,p_k$ 
	from samples $X_j \sim \Bino(\theta, p_j)$. The recent work of \cite{tian2017learning} uses the method of moments to obtain the optimal rate for 
	%small $t$.
	$\theta=o(\log k)$. 
	This is further improved in \cite{vinayak2019maximum} by analyzing the nonparametric maximum likelihood.
	%which circumvents the instability of estimating higher order moments. 
Alas, in this model, even for large population it is not possible to achieve consistent estimation without 	$\theta\to\infty$.
	%The recent work of 
	%\cite{tian2017learning} and \cite{vinayak2019maximum} focus on the setting of Robbins' Problem I (estimating the empirical distribution of $p_j$'s) and II (estimating $G$ from which $p_i$'s are iid drawn), respectively. Alas, in this model, even for large population it is not possible to achieve consistent estimation without 	$t\to\infty$.
%\nbr{I added \cite{tian2017learning} from the Valiants+X thread.
%I don't think the focus here is Problem I and II, as they are clearly equivalent in this specific problem and pointed by the authors. 
%Instead, the focus of \cite{vinayak2019maximum} is to analyze NPMLE which improves the method of moments in \cite{tian2017learning} that explodes when $t\gg \log k$.
%
%How about the following:
%``The recent work of \cite{tian2017learning} studies the method of moments 
%and \cite{vinayak2019maximum} analyzes the nonparametric maximum likelihood which circumvents the unstability of estimating higher order moments. 
%Alas, in this model, even for large population it is not possible to achieve consistent estimation without 	$t\to\infty$.''}

	\medskip
	The rest of the paper is organized as follows.
	\prettyref{sec:mind} introduces the minimum distance estimator and a general characterization of its risk by a linear program. 
	Sections \ref{sec:ub} and \ref{sec:lb} are devoted to analyzing the behavior of this LP using complex-analytic techniques and Laguerre polynomials, completing the proof of \prettyref{thm:main}. 
	\prettyref{app:discuss} contains a detailed discussion on related technical results and a list of open problems. 
	Omitted proofs are contained in the rest of the appendices.
	%Finally, instead of the minimum distance estimator which entails solving of an LP, in \prettyref{app:smoothing} we describe an explicit construction based on the smoothing technique introduced in \cite{OSW15} that achieves the upper bound of \prettyref{thm:main} for constant $p$.

	\section{Minimum distance estimator and statistical guarantees}\label{sec:mind}
	As mentioned in the last section, estimation of the profile revolves around the idea of minimum distance method, which fits a statistical model that is closest to the sample distribution with respect to some meaningful statistical distance. Examples of minimum distance estimators can be traced back to as early as \cite{P1900}, which led to the discovery of the famous minimum chi-square method. In the 1950's, Wolfowitz studied minimum distance methods for the first time as a class, for obtaining strongly (almost surely) consistent estimators \cite{W57}. The pioneering work of \cite{B1977} demonstrates how minimum-Hellinger method can improve upon classical estimators such as the maximum likelihood in the presence of outliers. For a comprehensive account and more recent development we refer the readers to the monograph \cite{basu}. 
	
	To describe the paradigm of the minimum distance estimators we first introduce the general setting of Robbins'
	Problem I mentioned in \prettyref{sec:prior}.
%
% Original paragraph: 
%	Given a model family $\calF$ of distributions and a metric $\rho(\cdot,\cdot)$ on the space of distributions, the minimum distance estimator is given by
%	\begin{align}
%	\label{eq:mindist-general}
%	\hat F=\argmin_{F\in\calF}\rho(F,\hat \nu)
%	\end{align} 
%	where $\hat \nu$ is some distribution obtained from data, e.g.~the empirical distribution. }
	Consider a parametric family of distributions $\{P_\theta: \theta\in\Theta\}$ on some measurable space $\calX$, viewed also as a Markov transition kernel $P$ from $\Theta$ to $\calX$.
	Let $d$ be a distance on the space of priors $\calP(\Theta)$.
	Select $\theta_1,\ldots,\theta_k$ from $\Theta$ such that  ${1\over k}\sum_{j=1}^k
	c(\theta_j) \le 1$, where $c:\Theta \to \mreals$ is some cost function (could be zero), resulting in the 
	empirical distribution 
		$ \pi \eqdef {1\over k} \sum_{j=1}^k \delta_{\theta_j} $.
	Given observations $X_j \simiid P_{\theta_j}$, an estimate $\hat \pi(X_1,\ldots,X_k)$ is
	produced with the goal of minimizing $ \EE[d(\hat \pi, \pi)]$. The minimax risk is defined as
		$$ R(k) = \inf_{\hat \pi} \sup_{\theta_1,\ldots,\theta_k} \EE[d(\hat \pi, \pi)]\,.$$
		\vspace{-1.5em}
	\begin{remark} Note that Robbins also defined a related Problem II in which $\theta_j\simiid G$ with 
	$\EE_G[c(\theta)] \le 1$ and the goal is to estimate the prior $G$ instead of
	the (now random) empirical distribution $\pi$. The minimax risk $R_2(k)$ is similarly defined as the supremum
	over all such $G$. We argue that in many cases the difference between $R(k)$ and $R_2(k)$ is insignificant. 

	Indeed, let $\tau_k = \sup_G \EE[d(G,\pi)]$, which due
	to concentration we assume is $o(R(k))$. The comparison $R_2(k) \le R(k) + \tau_k$ is by conditioning on
	$\pi$\apxonly{ (there is a small problem: $\pi$ does not satisfy cost constraint)}. In the opposite direction,
	if, for example, $d(\cdot,\cdot) \le 1$, then $R(k) \le R_2(m) + {m^2\over 2k}$ since by sampling $m$ times from
	$(X_1,\ldots,X_k)$ with replacement we get $m$ samples from Problem 2's setting with $G=\pi$ (except for a set of
	realizations of probability $m^2\over 2k$ on which we drew some $X_j$ multiple times). Applying Problem 2's
	estimator for $m$ samples we get the inequality. In interesting cases, $R_2(k) \asymp R_2(k^\alpha) \ll
	k^{-\beta}$ for any
	$\alpha,\beta>0$, and thus we get $R_1(k) \asymp R_2(k)$.
	\end{remark}
	To solve this problem we proceed by choosing an auxiliary metric $\rho$ on $\mathcal{P}(\Theta)$, the set of probability measures on $\Theta$. Let $\hat \nu =
	{1\over k} \sum_{j=1}^k \delta_{X_j}$ be the empirical distribution of the sample. 
	Note that in expectation we have, for all $\theta_1,\ldots,\theta_k$,
	\begin{equation}
	\Expect[\hat\nu]=\pi P.
	\label{eq:unbiased}
	\end{equation}	
	where $\pi P = \int P_\theta \pi(d\theta) = {1\over k} \sum_{j=1}^k P_{\theta_j}$.	
	This motivates the following minimum-distance estimator (putting existence of minimum aside):
	\begin{equation}
	\hat \pi = \argmin_{\pi'} \sth{\rho(\hat \nu, \pi'P): \Expect_{\pi'}[c(\theta)]\leq 1}\,. 
	\label{eq:mindist-general}
	\end{equation}	
	To analyze this estimator, suppose, in addition to \prettyref{eq:unbiased}, we have the high-probability guarantee:
	$$ \PP[ \rho(\pi P, \hat \nu) > t_k] \le \epsilon_k \qquad  $$
	for some sequences $t_k,\epsilon_k\to 0$. 
	By the triangle inequality we also have $ \PP[ \rho(\hat \pi P, \pi P) > 2t_k] \le \epsilon_k$.
	Finally, defining the following \emph{deconvolution function}:
	$$ \delta(t) \eqdef \sup \{d(\pi,\pi'): \rho(\pi P, \pi P') \le t, \EE_{\pi}[c(\theta)]\leq 1, \EE_{\pi'}[c(\theta)]
	\le 1 \}\,, $$
	where the supremization is over all distributions $\pi,\pi' \in \calP(\Theta)$. Then we immediately obtain the following high-probability risk bound
	$$ \PP[ d(\hat \pi, \pi) > \delta(2t_k) ] \le \epsilon_k\,.$$
	Using other properties of $d$ and $c$, we can typically convert this into an upper bound for the average risk like
		$$ \EE[d(\hat \pi, \pi)] \lesssim \delta(2t_k)\,.$$
	Selecting different auxiliary metric $\rho$'s results in different estimators. For example, the choice of $\rho$ equal to the
	Kullback-Leibler divergence results in a the non-parametric maximum-likelihood estimator.
	As stated this is all well known. 
	\textit{Our key contribution is the following:} While $\rho$ is
	left arbitrary so far, the choice of $\rho$ being total variation (or Hellinger) distance is special since it comes with an essentially
	matching lower bound. 
	\begin{quote}
	\emph{Meta-principle}. Suppose the loss function $d$
	is of seminorm-type, namely
	$ d(\pi,\pi') = \sup_{T \in \calT} \langle T, \pi-\pi' \rangle $
	for some dual pairing $\langle \cdot, \cdot \rangle$ and a family of linear functionals $\calT$ on
	$\calP(\Theta)$. Take $\rho(\cdot,\cdot) = \|\cdot - \cdot\|_{\TV}$. Then under regularity conditions on $(\Theta, \calX, c, P, \calT)$ we have
	$$ \delta(1/k) \lesssim R(k) \lesssim \delta(t_k) \,.$$
	Thus, when $\delta(1/k) \asymp \delta(t_k)$ we get the sharp rate.
	\end{quote}

	Working out general conditions for the applicability of this program is left for future work. Here we focus
	on the model discussed in the introduction. Recall $\pi=(\pi_{0},\ldots,\pi_k)$ in \prettyref{eq:composition}	denotes the profile of the urn.
	%\footnote{	Strictly speaking, the composition $\pi$ as defined in \prettyref{eq:composition} is not exactly the empirical distribution of $(\theta_j: j\in [k])$ (nor is it needed for the subsequent development), since we do not require $|[k]|$ to be $k$ or even finite; neverthless, if we take $[k]'$ to be the set of colors that appear in the urn and some arbitrarily chosen colors that are absent so that $|[k]'|=k$, then $\pi$ is precisely the empirical  distribution of the set of parameters $(\theta_j: x \in [k]')$.}
	In the Bernoulli sampling model, the observed numbers of balls with color $j$ are independently distributed as 
	\begin{align}
	X_j & \inddistr \Binom(\theta_j, p), \quad j \in [k].
	\end{align}
	Let $\hat \nu = \frac{1}{k} \sum_{j=1}^k \delta_{X_j}$ denote the empirical distribution of the $X_j$'s.
	Then for each $m \geq 0$, we have 
	$\hat\nu_m =\frac{Y_m}{k}$,
	where 
	\begin{equation}\label{eq:ydef}
	Y_m = \sum_{j\in [k]} 1\{X_j = m\}
	\end{equation}
	denotes the number of colors that are observed exactly $m$ times.\footnote{Technically, $\nu_0$ is not directly observed from the sample. Nevertheless, one can compute it by $\hat \nu_{0}\eqdef 1-\sum_{m=1}^k\hat \nu_m$.}
	%Define $\hat\nu=(\hat\nu_{0},\ldots,\hat\nu_k)$ by
	%%the sample version $\hat\nu=(\hat\nu_{0},\ldots,\hat\nu_k)$ of $\pi$ as
	%\begin{align*}
	%\hat \nu_{m}&\eqdef \frac 1k Y_m,\quad m=1,\dots,k, \quad \text{ and } \quad \hat \nu_{0}\eqdef 1-\sum_{m=1}^k\hat \nu_m.
	%\end{align*}
	Define the Markov kernel $P:\integers_+ \to \integers_+$ by $P(i,\cdot) = \Binom(i, p)$, whose transition matrix $P=(P_{im})$ is given by
		\begin{equation}
	P_{im}=\binom{i}{m}p^m(1-p)^{i-m}, \quad i,m \geq 0.
	\label{eq:P}
	\end{equation}	
	%A key observation is that
	%\begin{equation}
	%\Expect[\hat\nu]=\pi P.
	%\label{eq:unbiased}
	%\end{equation}	
	Then as in \prettyref{eq:unbiased}, we have the unbiased relation $\Expect[\hat\nu]=\pi P$.
	%where $P$ is the $(k+1)\times(k+1)$ transition matrix with 
	%This can be verfied explicitly as follows: for any $m\geq 1$
	%\begin{align*}
	%\EE[\hat \nu_m]=\frac 1k \EE[Y_m]&=\frac 1k \sum_{j\in [k]} \PP\{X_j=m\} = \frac 1k \sum_{x\in\mathcal X} \binom{\theta_j}{m}p^m(1-p)^{\theta_j-m}\\
	%&=\sum_{i\geq 1}\pi_{i}\binom{i}{m}p^m(1-p)^{i-m} =(\pi P)_{m}.
	%\end{align*} 
	%which also holds for $m=0$ by linearity.
	Particularizing \prettyref{eq:mindist-general} with $\rho=\|\cdot\|_{\TV}$ and $c(\theta)=\theta$, we obtain the following the minimum distance estimator:
	%$\hat{\pi}$ for $\pi$:
	\begin{align}
	\hat \pi = \argmin_{\pi' \in \Pi_k} \|\pi' P - \hat \nu\|_{\TV}\label{eq:pihat}
	\end{align}
	where 
	\begin{equation}
	\Pi_k\eqdef\sth{\pi'\in \calP\{0,1,\dots,k\}: \sum_{m=0}^{k} m \pi_{m}'\leq 1 },
	\label{eq:Pik}
	\end{equation}	
	with $ \calP\{0,1,\dots,k\}$ being the set of all probability mass functions on $\{0,1,\dots,k\}$. As mentioned in \prettyref{sec:intro}, the true profile $\pi$ belongs to $\Pi_k$.
	The estimator \prettyref{eq:pihat} is an LP with $k+1$ variables and can be solved in time that is polynomial in $k$. 
	%Note that we can include the case $k=\infty$ by considering distributions on non-negative integers. 
	We will show that it attains the minimax upper bound in \prettyref{thm:main}. As the first step, we relate the minimax risk $R(k)$ to the following LP of modulus of continuity type: for each $0<t<1$,
	\begin{align}
	\deltaTV(t) \triangleq \sup \{\|\pi - \pi'\|_{\TV}: \|\pi P - \pi' P\|_{\TV} \le t;\ \pi,\pi'\in \Pi \}, \label{eq:linear prog}
	\end{align}
	where $\Pi \triangleq \Pi_\infty$ as in \prettyref{eq:Pik}, that is, the set of all distributions on $\integers_+$ with mean at most one.	
	%This is an LP with $k+1$ variables and can be solved in time that is polynomial in $k$. 
	%We show that for some absolute constants $c$, $C$ the following holds. 
The following result shows that the value of this LP characterizes the minimax risk.
	%\nb{This is out of place. Remove and after Lemma 3, say something like ``Combining Thm 2 and Lemma 3, we obtain the main result in Thm 1. }
	%For any $t,p\in (0,1)$ and $k$
	%$$
	%\deltaTV(t) \le \min \sth{{C\over {p \log (1/t)}},1}
	%$$
	%and there exist absolute constant $t_0$ such that for any $p\in(0,1)$, $t\leq t_0$ and $k\geq 3\max\sth{{p\over \bar p}, {1\over\sqrt{\log(1/t)}}}\pth{\log(1/t)}^2$
	%\begin{equation}\label{eq:m1}
	%\deltaTV(t) \ge \min\sth{{\bar p\over p},\sqrt{\log(1/t)}}{c\over \log(1/t)}
	%\end{equation}
	%In view of next theorem this will imply our main result~\eqref{eq:main}.
	
	%\begin{theorem} Given the empirical version $\hat{\nu}$ of data distribution $\pi P$ we have
	%\begin{enumerate}
	%\item $\Expect \|\pi P-  \hat{\nu}  \|_{\TV}\lesssim O(\sqrt{\log k}/\sqrt{k}) $\label{emp-upperbounk}
	%\item $\deltaTV(1/2k)\le R(k) \le \deltaTV(\sqrt{\log k}/\sqrt{k})$\label{connecting} .
	%\end{enumerate}
	%\end{theorem}
	\begin{theorem} 
		\label{thm:risk-deltatv}
		There exist absolute constants $C_1,C_2, d_0$ such that for all $k\geq d_0$
		\begin{equation}
		\frac 1{72}\deltaTV\pth{\frac 1{6k}}-{C_2\over \sqrt k} \le R(k) \le 2 \deltaTV\pth{\sqrt{\frac{C_1 \log k}{k}}},
		\label{eq:risk-deltatv}
		\end{equation}
		where the upper bound is attained by the minimum distance estimator given in \prettyref{eq:pihat}.
	\end{theorem}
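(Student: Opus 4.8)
The proof splits into the two inequalities, which are essentially independent. The plan for the upper bound is a direct instantiation of the minimum-distance recipe of this section; the plan for the lower bound is a Le Cam two-point argument, and that is where the real difficulty sits.

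\emph{Upper bound.} Since the true profile $\pi$ lies in $\Pi_k$, the minimizer $\hat\pi$ of \eqref{eq:pihat} obeys $\|\hat\pi P-\hat\nu\|_\TV\le\|\pi P-\hat\nu\|_\TV$, hence $\|\hat\pi P-\pi P\|_\TV\le 2\|\hat\nu-\pi P\|_\TV$ by the triangle inequality; since $\hat\pi,\pi\in\Pi_k\subseteq\Pi$, extending $\deltaTV$ by $\deltaTV(t)\eqdef 1$ for $t\ge 1$ (keeping it nondecreasing) gives the deterministic bound $\|\hat\pi-\pi\|_\TV\le\deltaTV\big(2\|\hat\nu-\pi P\|_\TV\big)$. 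Everything then reduces to concentrating $\|\hat\nu-\pi P\|_\TV$ at scale $\sqrt{(\log k)/k}$. For the first moment, $\hat\nu_m=\frac1k\sum_j\mathbf 1\{X_j=m\}$ is an average of independent terms, so $\Var(\hat\nu_m)\le(\pi P)_m/k$ and $\EE\|\hat\nu-\pi P\|_\TV\le\frac1{2\sqrt k}\sum_m\sqrt{(\pi P)_m}$; since $\pi P$ is supported on $\{0,\dots,k\}$ with $\sum_{m\ge 1}m(\pi P)_m=p\,\EE_\pi[\theta]\le 1$, Cauchy--Schwarz gives $\sum_m\sqrt{(\pi P)_m}\le 1+(\sum_{m=1}^k 1/m)^{1/2}\le 1+\sqrt{1+\ln k}$, whence $\EE\|\hat\nu-\pi P\|_\TV\le C_0\sqrt{(\log k)/k}$. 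For concentration, altering one coordinate $X_j$ moves $\|\hat\nu-\pi P\|_\TV$ by at most $1/k$, so McDiarmid's inequality yields $\PP[\,\|\hat\nu-\pi P\|_\TV>C_0\sqrt{(\log k)/k}+s\,]\le e^{-2ks^2}$. Taking $s=\sqrt{(\log k)/k}$ and $C_1\eqdef 4(C_0+1)^2$, off an event of probability $\le k^{-2}$ one has $\|\hat\pi-\pi\|_\TV\le\deltaTV(\sqrt{C_1(\log k)/k})$, and on that event $\|\hat\pi-\pi\|_\TV\le 1$. Finally, testing $\pi=\delta_0$ against $\pi'=(1-\alpha)\delta_0+\alpha\delta_1$ with $\alpha=\min(1,t/p)$ shows $\deltaTV(t)\ge\min(1,t/p)\ge t$ for $t\in(0,1)$, so $k^{-2}\le\sqrt{C_1(\log k)/k}\le\deltaTV(\sqrt{C_1(\log k)/k})$ for $k$ large; averaging gives $R(k)\le\EE\|\hat\pi-\pi\|_\TV\le 2\deltaTV(\sqrt{C_1(\log k)/k})$, as claimed.

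\emph{Lower bound (plan).} I would use a Bayesian two-point argument reduced to a single linear functional. Take $\pi_0,\pi_1\in\Pi$ nearly attaining $\deltaTV(\tfrac1{6k})$, set $S=\{m:(\pi_0)_m>(\pi_1)_m\}$ so that $\pi_0(S)-\pi_1(S)=\|\pi_0-\pi_1\|_\TV$, and use the pointwise inequality $\|\hat\pi-\rho\|_\TV\ge\tfrac12|\hat\pi(S)-\rho(S)|$ to reduce matters to lower-bounding the error in estimating the functional $\rho\mapsto\rho(S)$. The two hypotheses draw $B\sim\Bern(\tfrac12)$ and then build a \emph{valid} random urn $\theta$ (with $\sum_j\theta_j\le k$) whose type-values are, up to a small correction, \iid from a dilated/truncated version of $\pi_B$. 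One wants: (a) the laws of the data $X=(X_1,\dots,X_k)$ under $B=0$ and $B=1$ are at most $\tfrac16$ apart in total variation --- by subadditivity of TV under $k$-fold products (equivalently, Poissonizing the histogram $(Y_m)$) together with the LP constraint $\|\pi_0P-\pi_1P\|_\TV\le\tfrac1{6k}$; (b) the profile $\rho$ of the urn satisfies $\EE|\rho(S)-\pi_B(S)|=O(1/\sqrt k)$ --- a one-coordinate Chernoff estimate; and (c) conditioning on validity degrades (b) by only $O(1/\sqrt k)$. Le Cam's two-point inequality applied to $\rho(S)$ then forces error at least a constant multiple of $(1-\tfrac16)\|\pi_0-\pi_1\|_\TV$, minus $O(1/\sqrt k)$, on the harder hypothesis; tracking the constants lost in (a)--(c) gives $R(k)\ge\tfrac1{72}\deltaTV(\tfrac1{6k})-C_2/\sqrt k$.

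\emph{Main obstacle.} The crux is (a)--(c): the LP \eqref{eq:linear prog} is taken over the idealized class $\Pi$ of all distributions on $\integers_+$ with mean at most one, while a genuine urn has at most $k$ balls and integer type-counts, so a near-extremal pair must be truncated (its tail can be heavy), rounded to masses in $\frac1k\integers$, and corrected for budget overflow --- and this has to be done so as to \emph{simultaneously} keep the TV gap on $S$ within $O(1/\sqrt k)$ \emph{and} perturb the per-coordinate law of the data by only $O(1/k)$, since a larger per-coordinate perturbation, multiplied over the $k$ coordinates, would push the data total variation up to $1$ and collapse the Le Cam bound. Balancing these --- via a constant-factor dilation pulling the mean strictly below one, a truncation at level $\Theta(k)$, and careful treatment of the $\Theta(1/k)$-probable oversized types --- is the delicate part of the argument, and it is what produces both the absolute constant $\tfrac1{72}$ and the additive $C_2/\sqrt k$ term; the surrounding Le Cam calculus is routine.
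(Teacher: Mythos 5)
Your upper-bound argument is correct and is essentially the paper's own proof: the same deterministic reduction $\|\hat\pi-\pi\|_{\TV}\le\deltaTV(2\|\hat\nu-\pi P\|_{\TV})$, the same variance-plus-Cauchy--Schwarz bound on $\EE\|\hat\nu-\pi P\|_{\TV}$ exploiting the mean constraint, the same McDiarmid concentration with bounded differences $1/k$, and the same use of $\deltaTV(t)\ge t$ to absorb the bad event. (Your explicit two-point witness for $\deltaTV(t)\ge t$ is a harmless substitute for the paper's data-processing remark.)

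The lower bound, however, is where your write-up stops short of a proof. The paper's argument here is deliberately thin: it observes that $2\|\hat\pi-\pi\|_{\TV}=\sup_T|T_{\hat\pi}-T_\pi|$ over $T:\integers_+\to[-1,1]$, so $R(k)\ge\frac12 R_T(k)$ for every such $T$; it then invokes Theorem~8 of \cite{PW18-dual2} (with $\Theta=\integers_+$, $c(\theta)=\theta$) to get $R_T(k)\ge\frac1{72}\deltaTV(\frac1{6k},T)-C_2/\sqrt k$ for the functional modulus \prettyref{eq:deltv_def}, and finishes by taking the supremum over $T$ via $\deltaTV(t)=\sup_T\deltaTV(t,T)$. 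Your ``plan'' --- reducing to the functional $\rho\mapsto\rho(S)$ and running Le Cam on two randomly generated urns --- is precisely a sketch of what lives \emph{inside} that cited theorem, and the ``main obstacle'' you correctly identify (truncating and dilating the idealized LP-optimal priors in $\Pi$, rounding masses to $\frac1k\integers$, handling budget overflow, all while keeping the per-coordinate perturbation of the data law at $O(1/k)$ and the loss in the functional gap at $O(1/\sqrt k)$) is exactly the content of that theorem's proof. Since you name this obstacle but do not resolve it, your lower bound is a program rather than a proof; the concrete constants $\frac1{72}$ and $\frac1{6k}$ that appear in the statement are outputs of the cited construction, and you have no independent derivation of them. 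If you are permitted to cite \cite{PW18-dual2} as the paper does, your reduction to a single bounded functional is the right (and the paper's) first step and the argument closes; if you are not, the truncation/validity analysis in steps (a)--(c) must actually be carried out, and that is a substantial piece of work, not routine Le Cam calculus.
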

	
	The proof of \prettyref{thm:risk-deltatv} is given in \prettyref{app:risk-deltatv}. 	
	The main idea is as follows. By virtue of the minimum distance estimator $\hat{\pi}$ 
	and the triangle inequality, we have:
		\[
		\|\hat \pi P - \pi P\|_{\TV} \leq 
		\|\hat \pi P - \hat \nu\|_{\TV} + \|\pi P - \hat \nu\|_{\TV} 	\leq 2 \|\pi P - \hat \nu\|_{\TV},
		\]
		which implies that $(\pi,\hat \pi)$ is a feasible pair for $\deltaTV(t)$ with $t=2 \|\pi P - \hat \nu\|_{\TV}$, and hence the following deterministic bound:
		\begin{align}\label{eq:dagger}
		\|\hat \pi - \pi\|_{\TV} \le \deltaTV(2 \|\pi P - \hat \nu\|_{\TV})\,
		\end{align} 
		Recall from \prettyref{eq:unbiased} that $\hat \nu$ is an unbiased estimator of $\pi P$.
		Furthermore, by concentration inequality one can show that with high probability that $\|\hat \nu-\pi P\|_{\TV} = O(\sqrt{\frac{\log k}{k}})$, from which the upper bound quickly follows.
		The lower bound follows from that of estimating linear functionals developed in \cite{PW18-dual2}. Roughly speaking, we use the optimal solution $(\pi,\pi')$ for $\deltaTV(\Theta(1/k))$ to randomly generate two urns of size $\Theta(k)$ whose sampled version are statistically indistinguishable. With appropriate truncation argument, this can be turned into a valid minimax lower bound via Le Cam's method \cite{Tsybakov09}.
		
		%The lower bound proof follows the strategy of fuzzy hypothesis testing (cf.~\cite[2.7.4]{Tsybakov09}). Roughly speaking, we use the optimal solution $(\pi,\pi')$ 
	%for $\deltaTV\pth{\frac 1{6k}}$ to randomly generate two urns of size $\Theta(k)$ whose sampled version are statistically indistinguishable. With some appropriate truncation argument, this can be turned into a valid minimax lower bound via Le Cam's method.
		%

	\prettyref{thm:risk-deltatv} allows us to reduce the statistical problem \prettyref{eq:Rk} to studying the behavior of $\deltaTV(t)$ for small $t$. This is characterized by the following lemma:
	\begin{lemma}\label{lmm:ratedelta-TV}
		\begin{enumerate}[(1)]
			\item There exists absolute constant $C_3>0$ such that for all $p,t$ we have
			\begin{align} 
			\deltaTV(t)\leq \min \sth{{C_3\over p\log (1/t)},1}.\label{eq:deltaTVub}
			\end{align}
			\item There exist absolute constants $C_4,t_0>0$ such that for any $p\in (0,1)$, $t\leq t_0$, 
			%and $k\geq 3\max\sth{{p\over \bar p}, {1\over\sqrt{\log(1/t)}}}\pth{\log(1/t)}^2$
			\begin{align}
			\deltaTV(t)\geq \min\sth{{\bar p\over p},\sqrt{\log(1/t)}}{C_4\over \log(1/t)}.\label{eq: deltaTV lb}
			\end{align}
		\end{enumerate}
	\end{lemma}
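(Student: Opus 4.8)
\emph{Proof plan.} Both parts are proved by analysing the LP \prettyref{eq:linear prog} through generating functions. For a finite signed measure $\mu$ on $\integers_+$ write $f_\mu(z)=\sum_{i\ge0}\mu_i z^i$. Since $\Binom(i,p)$ has generating function $(\bar p+pz)^i$, the binomial kernel acts on generating functions by the affine substitution $z\mapsto\bar p+pz$, i.e.\ $f_{\mu P}(z)=f_\mu(\bar p+pz)$. Writing $\|g\|_A\triangleq\sum_m\bigl|[z^m]g\bigr|$ for the Wiener (absolute-coefficient-sum) norm, we have $2\|\mu\|_{\TV}=\|f_\mu\|_A$ and $2\|\mu P\|_{\TV}=\|f_\mu(\bar p+p\,\cdot\,)\|_A$, while feasibility $\pi,\pi'\in\Pi$ of $\mu=\pi-\pi'$ forces $f_\mu(1)=0$ (two probability measures) and $\|f_\mu'\|_A\le2$ (mean at most one). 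Thus, up to a factor of $2$, \prettyref{eq:linear prog} asks how large $\|f\|_A$ can be for a power series $f$ with $f(1)=0$, $\|f'\|_A\le2$, and $\|f(\bar p+p\,\cdot\,)\|_A\le2t$. Geometrically, $z\mapsto\bar p+pz$ sends $\{|z|\le1\}$ onto the disk $\{|w-(1-p)|\le p\}$, which is internally tangent to the unit circle at $w=1$; the third constraint therefore forces $f$ to be $O(t)$ on a small disk hugging $w=1$, and both bounds amount to quantifying how this local smallness interacts with the global norm $\|f\|_A$ under the regularity bound $\|f'\|_A\le2$.

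\textbf{Upper bound (part 1).} The bound $\deltaTV(t)\le1$ is immediate. For the main estimate I would bound the LP value by weak duality. The key identity is that for any bounded $h:\integers_+\to\reals$, any cutoff $N$, and any $g$ with $\|g\|_\infty\le1$,
\[
\langle g,\mu\rangle=\langle g-P^{*}h,\mu\rangle+\langle h,\mu P\rangle\le \sum_{i\le N}\bigl|g(i)-(P^{*}h)(i)\bigr|\,|\mu_i|+(1+\|h\|_\infty)\!\sum_{i>N}|\mu_i|+\|h\|_\infty\,\|\mu P\|_1,
\]
where $\sum_{i>N}|\mu_i|\le2/N$ by the mean bound and $\|\mu P\|_1\le2t$. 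Taking $g$ in the subgradient of $\|\cdot\|_{\TV}$ at $\mu$, the crux is to construct a dual certificate $h$ so that $P^{*}h$ tracks $g$ on the index range $\{0,\dots,N\}$ carrying the bulk of $\mu$; by the mean bound the optimal choice is $N\asymp p\log(1/t)$, which gives $\deltaTV(t)\simleq1/N\asymp1/(p\log(1/t))$. Constructing $h$ is a deconvolution problem against the binomial kernel, and here one passes to an $H^\infty$-relaxation: the discrete problem is relaxed to finding a bounded analytic function with prescribed behaviour on the critical disk above, whose optimal value is computed by a conformal change of variables and whose solution is expanded in a basis adapted to the binomial kernel (Laguerre-type polynomials), the resulting coefficient sums being estimated by contour integration around the circle through $w=1$. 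A naive truncation-and-exact-inversion argument does \emph{not} suffice here --- it yields only an $O(1)$ bound --- which is precisely why the complex-analytic input is needed.

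\textbf{Lower bound (part 2).} I would exhibit explicit feasible pairs. When $p$ is bounded away from $0$ and $1$, take $f(z)=c\,(z-1)^d$ with $d\asymp\log(1/t)/\log(1/p)$ and $c$ the largest constant making $\|f'\|_A=c\,d\,2^{d-1}\le1$. Then $f(\bar p+pz)=c\,p^{d}(z-1)^d$, so $\|f(\bar p+p\,\cdot\,)\|_A=c\,(2p)^{d}\le2t$ by the choice of $d$, while $\|f\|_A=c\,2^{d}=2/d$; writing $\mu^{\pm}$ for the positive and negative parts of the coefficient sequence of $f$ and adding a common atom at $0$ yields $\pi,\pi'\in\Pi$ with $\|\pi-\pi'\|_{\TV}=1/d\asymp1/\log(1/t)\asymp(\bar p/p)/\log(1/t)$, as claimed. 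For small $p$ the monomial $(z-1)^d$ is far from optimal because the disk $\{|w-(1-p)|\le p\}$ shrinks with $p$; instead I would solve, up to constants, the extremal problem ``maximize $\|f\|_A$ subject to $f(1)=0$, $\|f'\|_A\le1$, and $f$ of size $O(t)$ on that disk,'' whose optimizer is a rescaled Laguerre polynomial, and evaluate $\|f\|_A$ and $\|f(\bar p+p\,\cdot\,)\|_A$ by a saddle-point analysis. This gives $\deltaTV(t)\simgeq\min\{\bar p/p,\sqrt{\log(1/t)}\}\cdot C_4/\log(1/t)$, the saturation at $\sqrt{\log(1/t)}$ arising at the degree beyond which the mean/integrality constraints prevent further gains.

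\textbf{Main obstacle.} The hard part throughout is the \emph{sharp} evaluation of Wiener-type norms such as $\|f(\bar p+p\,\cdot\,)\|_A$ for the near-extremal $f$: crude Cauchy estimates on the unit circle lose at least a logarithmic factor and cannot distinguish $1/(p\log(1/t))$ from $\log(1/p)/\log(1/t)$, so one must integrate along the circle through $w=1$, perform steepest descent, and invoke the asymptotics of Laguerre polynomials --- the complex-analytic toolkit mentioned in \prettyref{sec:intro}. Making the upper-bound certificate and the lower-bound extremizer coincide up to constants is what lets $\deltaTV(1/k)$ pin down the rate $R(k)$ in \prettyref{thm:risk-deltatv}, hence \prettyref{thm:main}.
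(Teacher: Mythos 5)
Your opening reduction---passing to generating functions, noting $f_{\mu P}(z)=f_\mu(\bar p+pz)$, and recasting $\deltaTV$ as a question about the Wiener norm $\|\cdot\|_A$ under the constraints $\|f'\|_A\le 2$ and $\|f(\bar p+p\,\cdot\,)\|_A\le 2t$---is exactly the paper's $\delta_*(t)$ reformulation, and it is correct. But both halves of what follows have genuine gaps. For the upper bound, the paper does not build a dual certificate at all: it argues \emph{primally} that every feasible $f$ has small Taylor coefficients, via the chain $\|f'\|_{H^\infty(D)}\le1$ and $\|f\|_{H^\infty(\bar p+pD)}\le t$ imply $\|f\|_{H^\infty(D)}\le 2$, then Hadamard's three-lines theorem (interpolating between the horodisks $\bar p+pD$ and $D$) gives $\|f\|_{H^\infty(D_{3/4})}\le 2t^{\min(p/(3\bar p),1)}$, then Cauchy's estimate gives $|\Delta_\ell|\le 2^\ell t^{p/3}$, and finally the Markov tail bound $\sum_{m\ge J}|\Delta_m|\le 1/J$ yields $\delta_*(t)\le 2/J$ with $J\asymp p\log(1/t)$. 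Your dual route would instead require, for \emph{every} sign pattern $g$ on $\{0,\dots,N\}$ with $N\asymp p\log(1/t)$, a certificate $h$ with $\|g-P^*h\|_{\ell^\infty(\{0,\dots,N\})}\simleq 1/N$ and $\|h\|_\infty\simleq 1/(Nt)$; you never construct it, and that construction is the entire content of the bound. (The Laguerre polynomials you invoke for the upper bound appear in the paper only in the lower bound.)

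For the lower bound, your ``easy regime'' construction is actually wrong for $p\ge\tfrac12$: with $f(z)=c(z-1)^d$ one gets $f(\bar p+pz)=cp^d(z-1)^d$, hence $\|f(\bar p+p\,\cdot\,)\|_A=c(2p)^d$, which does not decay in $d$ once $p\ge\tfrac12$, so the monomial only covers $p$ bounded away from $\tfrac12$ from below, not ``away from $0$ and $1$.'' The function that works for all $p$ is $h(z)=\tilde h(\alpha z)$ with $\tilde h(z)=\exp(-\beta\tfrac{1+z}{1-z})$ and $\alpha=1-1/\beta$, for which $\|\tilde h\|_{H^\infty(1-q+qD)}=e^{-\beta(1-q)/q}$ gives the needed decay on $\bar p+pD$. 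Its coefficients are $\Delta_m=e^{-\beta}\alpha^m L_m^{(-1)}(2\beta)$, and the real work---entirely absent from your sketch---is the Plancherel--Rotach asymptotics showing that although individual coefficients oscillate through zero, two \emph{consecutive} coefficients with $m\in(\beta,3\beta/2)$ cannot both be small, whence $\|h\|_A\simgeq\sqrt{\beta}\,\alpha^{3\beta/2}$; choosing $\beta=\max(\mu p,\sqrt{\mu})$ with $\mu\asymp\log(1/t)/\bar p$ is what produces the $\min\{\bar p/p,\sqrt{\log(1/t)}\}$ shape of the bound. Your plan names the right tools (saddle point, Laguerre asymptotics) but supplies neither the extremal function valid for all $p$ nor the coefficient lower bound on which the statement rests.
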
 
	Combining \prettyref{thm:risk-deltatv} and \prettyref{lmm:ratedelta-TV} yields the main result in \prettyref{thm:main}. 
	The next two sections are devoted to the proof of \prettyref{lmm:ratedelta-TV}.
	
\begin{remark}[Reverse data processing]
\label{rmk:reverseDPI}	
	Note that by the data processing inequality (DPI) of TV distance, we have  $\|\pi P-\pi'P\|_{\TV} \leq \|\pi-\pi\|_{\TV}$ and hence
	$\deltaTV(t) \geq t$. 
	Therefore \prettyref{lmm:ratedelta-TV} can be understood as a \emph{reverse DPI} for the binomial kernel $P$ in \prettyref{eq:P}. For example, if $p=\Theta(1)$, then 
	\prettyref{eq:deltaTVub} implies that (which is the best possible in view of \prettyref{eq: deltaTV lb}):
	\[
	\|\pi P-\pi'P\|_{\TV} \geq \exp\sth{-\Theta\pth{\frac{1}{\|\pi-\pi\|_{\TV}^2}}}.
		\]
	%Other examples of reverse DPIs include \nb{any examples YP?. If not, just delete this sentence.}
\end{remark}

	\section{Upper bound on $\deltaTV(t)$ by $H^\infty$-relaxation}
	\label{sec:ub}
	%In this section we prove the upper bound part of 	\prettyref{lmm:ratedelta-TV}.
	To bound $\deltaTV(t)$ from above, we first relate it to the following LP
	\begin{align}\label{eq:delta*}
	\delta_*(t) \eqdef \sup_{\Delta}\sth{\sum_{m=0}^{\infty} |\Delta_m|: \|\Delta P\|_1 \le t, \sum_{m=0}^\infty m|\Delta_m| \le 1}.
	\end{align}

%For simplicity, abbrev we denote the $k=\infty$ case by $\delta_*(t)$. 
The next lemma shows how the two LPs \prettyref{eq:linear prog} and \prettyref{eq:delta*}
 are related. The proof is straightforward and deferred till \prettyref{app:proofs}. 
	\begin{lemma}\label{lmm:delta*}
		For all $t\in [0,1]$ we have $\frac{1}{2}(\delta_*(t)-t)\leq \deltaTV(t)\leq \delta_*(t)$.
	\end{lemma}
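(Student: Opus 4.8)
The plan is to prove the two inequalities $\deltaTV(t)\le\delta_*(t)$ and $\deltaTV(t)\ge\tfrac12(\delta_*(t)-t)$ separately. The first is an essentially free relaxation; the second requires converting a feasible signed vector for $\delta_*(t)$ into an honest pair of mean-constrained probability measures, which is where all the (mild) work lies.

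For the upper bound, given any feasible pair $(\pi,\pi')$ for $\deltaTV(t)$ I would set $\Delta=\tfrac12(\pi-\pi')$. By linearity of $P$, $\|\Delta P\|_1=\|\pi P-\pi'P\|_{\TV}\le t$, and since $\pi,\pi'\in\Pi$ have mean at most one, $\sum_m m|\Delta_m|\le\tfrac12\bigl(\sum_m m\pi_m+\sum_m m\pi'_m\bigr)\le1$. Thus $\Delta$ is feasible for $\delta_*(t)$, with objective value $\sum_m|\Delta_m|=\|\pi-\pi'\|_{\TV}$; taking the supremum over $(\pi,\pi')$ gives $\deltaTV(t)\le\delta_*(t)$.

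For the lower bound I would start from a near-optimal $\Delta$ for $\delta_*(t)$, say with $\sum_m|\Delta_m|\ge\delta_*(t)-\epsilon$, and repair its two defects relative to a difference of measures in $\Pi$: it need not sum to zero, and its $\ell^1$-mass need not be at most $2$. Since $P$ is row-stochastic, $s\eqdef\sum_m\Delta_m=\sum_m(\Delta P)_m$, so $|s|\le\|\Delta P\|_1\le t$; and since the $0$-th row of $P$ is $\delta_0$ (because $\Binom(0,p)=\delta_0$), replacing $\Delta$ by $\hat\Delta\eqdef\Delta-s\delta_0$ makes it sum to zero while leaving $\sum_m m|\Delta_m|$ untouched, and gives $\|\hat\Delta P\|_1\le 2t$ and $\|\hat\Delta\|_1\ge\|\Delta\|_1-t\ge\delta_*(t)-\epsilon-t$. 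I would then take the Jordan decomposition $\hat\Delta=\hat\Delta_+-\hat\Delta_-$, whose parts have equal total mass $c\eqdef\tfrac12\|\hat\Delta\|_1$, set $M\eqdef\max\{c,1\}$, and define $\pi=\tfrac1M(\hat\Delta_++(M-c)\delta_0)$ and $\pi'=\tfrac1M(\hat\Delta_-+(M-c)\delta_0)$. These are probability measures with mean at most $1/M\le1$, hence in $\Pi$; they satisfy $\|\pi P-\pi'P\|_{\TV}=\tfrac1{2M}\|\hat\Delta P\|_1\le t/M\le t$ (here $M\ge1$ is exactly what absorbs the factor-$2$ loss incurred by the shift), so $(\pi,\pi')$ is feasible for $\deltaTV(t)$, and $\|\pi-\pi'\|_{\TV}=c/M=\min\{c,1\}$. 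Therefore $\deltaTV(t)\ge\min\{c,1\}$, and letting $\epsilon\downarrow0$, $\deltaTV(t)\ge\min\{\tfrac12(\delta_*(t)-t),1\}$.

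To finish I would record the elementary a priori bound $\delta_*(t)\le 2+t$: any feasible $\Delta$ has $\sum_{m\ge1}|\Delta_m|\le\sum_m m|\Delta_m|\le1$ and $|\Delta_0|=\bigl|(\Delta P)_0-\sum_{i\ge1}(1-p)^i\Delta_i\bigr|\le\|\Delta P\|_1+\sum_{i\ge1}|\Delta_i|\le t+1$, so $\|\Delta\|_1\le 2+t$. Hence $\tfrac12(\delta_*(t)-t)\le1$ and the minimum above collapses to $\tfrac12(\delta_*(t)-t)$, which would complete the proof. The main obstacle — really the only nontrivial point — is this conversion step: a feasible $\Delta$ for $\delta_*$ is merely an $\ell^1$ sequence with a first-moment bound, and one must verify that the natural candidate pair lands in $\Pi$ while keeping the $\TV$-budget on the pushforwards intact; the identity $\delta_0 P=\delta_0$ (which makes the zero-sum correction free) together with the $\max\{c,1\}$ normalization (which handles $\|\hat\Delta\|_1>2$) is what makes it go through.
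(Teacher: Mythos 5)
Your proof is correct and follows essentially the same route as the paper's: the upper bound via $\Delta=\tfrac12(\pi-\pi')$ is identical, and the lower bound uses the same zero-sum repair at coordinate $0$ (costing at most $t$ in both $\|\cdot\|_1$ and $\|\cdot P\|_1$) followed by a Jordan decomposition with mass added at $0$. The only difference is cosmetic: the paper skips your $\max\{c,1\}$ normalization and the a priori bound $\delta_*(t)\le 2+t$ by setting $\pi_m=\Delta_m^+$ for $m\ge1$ and $\pi_0=1-\sum_{j\ge1}\Delta_j^+$ directly, where nonnegativity of $\pi_0$ already follows from the first-moment constraint $\sum_{j\ge 1}|\Delta_j|\le\sum_j j|\Delta_j|\le 1$.
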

	\begin{remark}
	Note that our only goal is to substitute estimates on $\deltaTV$ into~\eqref{eq:risk-deltatv}. Therefore, due to the presence of
	the (unavoidable) second term in the LHS of~\eqref{eq:risk-deltatv}, the slight difference between
	$\delta_*(t)-t$ and $\delta_*(t)$ in the lower bound in Lemma~\ref{lmm:delta*} is completely irrelevant and we
	can essentially think of $\deltaTV$ and $\delta_*$ as universally within a factor of two of each other.
	\end{remark}

	\begin{proof}[Proof of upper bound in Lemma~\ref{lmm:ratedelta-TV}]
	We start with recalling a few facts from the complex analysis. Denote the sup-norm of a holomorphic function $f$ over an open set $V\subset\complex$ by
	$\|\cdot\|_{H_{\infty}(V)}$. Let $D=D_1$ be the open unit disk in $\mathbb{C}$ and denote the horodisks for
	$0<p\le 1$ as
	$$ D_p \eqdef \bar p + pD=\{z\in\complex: |z-\bar p|\leq p\}\,.$$
%	We recall a few facts from complex analysis.
	In addition, we also define another norm for functions analytic in the neighborhood of the origin:
	\begin{equation}\label{eq:anorm}
		\|f\|_A  \eqdef \sum_{j=0}^\infty |a_j|, \qquad f(z) \eqdef \sum_{j\ge 0} a_j z^j\,.
\end{equation}	
		Since $f(re^{i\omega}) \le \sum_{n\ge 0} r^n |a_n| \le \|f\|_A$, we have
		\begin{equation}
		 \|f\|_{H^\infty(D)} \le \|f\|_A\,.
		\label{eq:HAnorm}
		\end{equation}

	In~\cite[(39)]{PSW17-colt} by an application of Hadamard's three-lines theorem, it was shown that for any
	$q\in(0,1)$ and any holomorphic function $f$
	\begin{equation}\label{eq:horo1}
		\|f\|_{H^\infty(D_{1/2})} \le \|f\|_{H^\infty(D)}^{1-2q\over \bar q} \|f\|_{H^\infty(D_q)}^{q\over \bar
	q}\,.
	\end{equation}	
	Indeed, reparametrizing $f(z)=g(\frac{1+z}{1-z})$, we have 
	\begin{equation}
	\|g\|_{H^{\infty}(\Re=r)}=\|f\|_{H^\infty(D_{1/(1+r)})}.
	\label{eq:gfHinfty}
	\end{equation}
	 for $r\geq 0$. Then the Hadamard three-lines theorem applied to $g$ shows that $r\mapsto \log \|f\|_{H^\infty(D_{1/(1+r)})}$
	 is convex, proving \prettyref{eq:horo1}.
	A straightforward generalization (with a different choice of the middle line in the Hadamard theorem) 
	shows that more generally for any $1>q_1 > q>0$ we have
	\begin{equation}\label{eq:dl_ub5}
		\|f\|_{H^\infty(D_{q_1})} \le \|f\|_{H^\infty(D)}^{1-{q \bar q_1\over \bar
	q q_1}} \|f\|_{H^\infty(D_q)}^{q \bar q_1\over \bar
	q q_1}\,.
	\end{equation}
	Next, for any $f$ holomorphic on $\lambda D$ for $\lambda > 0$ we have the following estimate
	\begin{equation}\label{eq:cauchy}
		{1\over \ell !} |f^{(\ell)}(0)| \le \lambda^{-\ell} \|f\|_{H^\infty(\lambda D)}\,.
	\end{equation}
which follows by a Cauchy integral formula: $\frac{f^{(\ell)}(0)}{\ell!}=\frac{1}{2\pi i}\oint_{|z|=\lambda}\frac{f(z)}{z^{\ell+1}}\,dz$.
	%This follows by a Cauchy integral formula:
		%\begin{align*}
		%\frac{f^{(\ell)}(0)}{\ell!}=\frac{1}{2\pi i}\oint\limits_{|z|=\lambda}\frac{f(z)}{z^{\ell+1}}\,dz
		%\end{align*}
	%since we can estimate the magnitude of the integral by $\|f\|_{H^\infty(\lambda D)} \cdot (2\pi
	%\lambda^{-\ell})$.

	With these preparations we move to the proof of~\eqref{eq:deltaTVub}. Consider any sequence $\Delta$ feasible
	for $\delta_*(t)$. For each absolutely summable sequence $\Delta$, we consider its $z$-transform:
	$	f_\Delta(z) \triangleq \sum_{m \geq 0} \Delta_m z^m$, 
	which is a holomorphic function on the open unit disk $D$. 
	Furthermore, using the definition of $P$ in \prettyref{eq:P} and the binomial identity, it is straightforward to verify that
	$f_{\Delta P}  = Pf_\Delta,$
	where the Markov kernel $P$ acts on $f$ as a composition operator 
	$(Pf)(z) \triangleq f(p z+\bar{p})$, where $\bar p\eqdef 1-p$. 
	Given this observation we see that the definition of $\delta_*(t)$ can also be restated as optimization over
	all holomorphic functions on the unit disk, cf.~\eqref{eq:anorm}:
	\begin{equation}\label{eq:deltas_redef}
		\delta_*(t) =  \sup_{f}\sth{\|f\|_A: \|Pf\|_A \le t, \|f'\|_A \le 1}\,.
	\end{equation}	

	For any feasible $f$ in~\eqref{eq:deltas_redef} we have that $\|f'\|_{H^\infty(D)}\le 1$ and
	$\|f\|_{H^\infty(D_p)} \le t$. Thus, integrating $f'$ from some point in $D_p$ we obtain that also
	$ \|f\|_{H^\infty(D)} \le 1+t \le 2\,.$
%	Since 
%	\begin{equation}\label{eq:dl_ub4}
%		\sum_m |\Delta_m|\le \sum_m m |\Delta_m| \le 1 
%	\end{equation}	
%	and $\sum_m |(\Delta P)_m| \le t$, $f_\Delta$ must satisfy
	%\begin{equation}\label{eq:relax2}
	%\|f_\Delta\|_{H^{\infty}(D)}\leq 1, \quad
	%\|f_\Delta\|_{H^{\infty}(D_p)} \le t\,,
	%\end{equation} 
%$\|f_\Delta\|_{H^{\infty}(D)}\leq 1$ and $\|f_\Delta\|_{H^{\infty}(D_p)} \le t$. 
	Therefore, applying~\eqref{eq:dl_ub5} to $f$ we get
	$$ \|f\|_{H^\infty(D_{3/4})} \le 2 t^{\min(\frac{p}{3\bar p},1)}\,.$$
	Next, since $\frac12D\subset D_{3/4}$ we have from~\eqref{eq:cauchy}
	\begin{equation}\label{eq:dl_ub}
		|\Delta_\ell| ={1\over \ell !} |f^{(\ell)}(0)| \le 2^\ell t^{\min(\frac{p}{3\bar p},1)} \le 2^\ell
	t^{p/3}\,.
	\end{equation}	

	Finally, since for any $\Delta$ feasible for $\delta_*(t)$ we have $\sum_m m|\Delta_m| \le 1$, Markov inequality
	implies $\sum_{m \ge J} |\Delta_m|  \le {1\over J}$ for any integer $J\ge 1$. 
	Together with~\eqref{eq:dl_ub} we conclude that for any feasible $\Delta$-sequence
	\begin{equation}\label{eq:dl_ub3}
			\sum_m |\Delta_m| \le J 2^{J} t^{\frac{p}{3}} + {1\over J}  \le {1\over J} \left(1 + 6^J
		t^{p/3}\right)\,,
	\end{equation}		
	where in the last step we used $J^2 \le 3^J$. Hence, whenever $J\le\left
	\lfloor {p\log {1\over t}\over 3 \log 6} \right \rfloor$, the right-hand side of~\eqref{eq:dl_ub3}
	can be upper-bounded by $2\over J$. This, in view of Lemma~\ref{lmm:delta*} completes the proof of~
	\eqref{eq:deltaTVub} since by definition~$\delta_{\TV}\le 1$. 
	\end{proof}

\begin{remark} Note that functions that saturate~\eqref{eq:horo1} are $f(z) = e^{-m {1+z\over 1-z}}$ where $m \sim \log
{1\over t}$. Computing Taylor coefficients $[z^\ell]f(z)$ of $f(z)$ for $\ell = \Theta(m)$ can be done by applying the saddle-point method to the integral
	$$ [z^\ell] f(z) = {1\over 2\pi i} \oint e^{-m{1+z\over 1-z} - (\ell+1) \log z} dz\,.$$
	It turns out that these coefficients behave in the following way, when $\ell/m = \Theta(1)$:
	$$ [z^\ell] f(z) = \begin{cases} e^{-\Theta(m)}, & \ell/m < 1/2\\
				\Theta\left(1\over \sqrt{m}\right), & \ell/m > 1/2
			\end{cases} $$
	This dichotomy corresponds to critical points of the function ${1+z\over 1-z} - {\ell\over m} \log z$ leaving
	the unit circle when $\ell/m < 1/2$. This shows that the estimate in~\eqref{eq:dl_ub3} is qualitatively tight.
	This effect of sudden jump in the magnitude of coefficients will be the basis of the lower bound in the next section.
\end{remark}

	\section{Lower bound on $\deltaTV(t)$}
	\label{sec:lb}
	
	%\section{Proof of \prettyref{lmm:delta*-lb}}
	%\label{sec:delta*-lb}
	In view of \prettyref{lmm:delta*} it suffices to consider $\delta_*(t)$ in \eqref{eq:delta*}. Given the
	equivalent definition~\eqref{eq:deltas_redef}, as a warm-up, let us naively replace all $\|\cdot\|_A$ norms with
	$\|\cdot\|_{H^\infty(D)}$. We then get the following optimization problem:
	\begin{align}\label{eq:full relaxation}
	\delta_{H^\infty}(t) &\eqdef \sup\{\|f\|_{H^\infty(D)}:  \|f'\|_{H^{\infty}(D)}\leq 1, \|f\|_{H^{\infty}(\bar{p}+pD)}\le t\}
	\end{align}  
	Note that even though the objective function of \prettyref{eq:full relaxation} is smaller than that of $\delta_*(t)$, the feasible set is also a relaxation.
	%, which can be seen from \eqref{eq:restatement} and \eqref{eq:relax}. 
	Thus $\delta_{H^\infty}(t)$ does not constitute a valid lower bound to $\delta_*(t)$; nevertheless its solution, given in the following lemma, provides important insight on constructing a near-optimal solution for $\delta_*(t)$. 
	\begin{lemma}
	$\delta_{H^\infty}(t)= \Theta_p \pth{1\over \log \pth{1/ t}}$. 
		%\begin{equation}\label{eq:m11}
	%\delta_{H^\infty}(t)= \Theta_p \pth{1\over \log \pth{1/ t}}\,.
	%\end{equation}
%	\nb{$\asymp$ should be $\asymp_p$. 
%	I think this is from a while ago and you did not uniformize the notation with latest conventions.
%	Below constants should depend on $p$.}
	\end{lemma}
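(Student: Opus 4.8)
The plan is to reduce both directions to the right half‑plane via the Cayley transform $w=\frac{1+z}{1-z}$: writing $g(w)=f(\frac{w-1}{w+1})$ one has (cf.~\eqref{eq:gfHinfty}) $D\leftrightarrow\{\Re w>0\}$ and $D_p\leftrightarrow\{\Re w\ge r_0\}$ with $r_0\eqdef\bar p/p$, while $f'(z)=\frac{(w+1)^2}{2}\,g'(w)$, so that the constraint $\|f'\|_{H^\infty(D)}\le1$ becomes the pointwise bound $|g'(w)|\le\frac2{|w+1|^2}$ on the half‑plane. Everything downstream concerns how this weighted derivative bound interacts with the smallness of $g$ on $\{\Re w\ge r_0\}$, and the extremal profile it dictates is the $(w+1)^{-2}$ factor appearing below.

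For the \textbf{upper bound} I would fix a feasible $f$, set $M\eqdef\|f\|_{H^\infty(D)}$ (finite, $\le 2+t$, by integrating $f'$ from a point of $D_p$), and assume $M>t$ (else done). Since $D_p\subset D_{1-u}\subset D$ for $0<u<\bar p$, the generalized three‑lines estimate \eqref{eq:dl_ub5} (with $q_1=1-u$, $q=p$) gives $\|f\|_{H^\infty(D_{1-u})}\le M^{1-\lambda}t^{\lambda}$ with $\lambda\ge\frac{pu}{\bar p}$; and since $D_{1-u}$ is convex and $\sup_{z\in D}\mathrm{dist}(z,\overline{D_{1-u}})=2u$, integrating $f'$ along segments yields the reverse bound $M\le\|f\|_{H^\infty(D_{1-u})}+2u$. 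Combining these gives the self‑referential inequality $M\le M(t/M)^{\lambda}+2u$. The hard part is extracting the \emph{sharp} rate here: if one bounds $M^{1-\lambda}\le2$ and optimizes $2t^{\lambda}+2u$ over $u$, one only gets $O_p\!\big(\tfrac{\log\log(1/t)}{\log(1/t)}\big)$. The fix is to keep the factor $M$ and run a dichotomy: assuming $M>\frac{C_p}{\log(1/t)}$ with $C_p$ a suitable multiple of $\bar p/p$ forces $\log(M/t)\ge\tfrac12\log(1/t)$, so choosing $u\asymp\frac{\bar p}{p\log(1/t)}$ makes $\lambda\log(M/t)$ a genuine constant $\ge\log2$, i.e.\ $(t/M)^{\lambda}\le\tfrac12$; the self‑referential bound then collapses to $M\le4u<\frac{C_p}{\log(1/t)}$, a contradiction. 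I expect this to be the main obstacle, and the key realization is that although the interpolation weight $\lambda$ is forced to be tiny ($\asymp1/\log(1/t)$), the a priori bound $M\le2$ makes $\log(M/t)\asymp\log(1/t)$ large enough to compensate.

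For the \textbf{lower bound} I would exhibit an explicit near‑extremal $f$, guided by the observation that $e^{-m\frac{1+z}{1-z}}$ saturates the three‑lines inequality but fails the derivative constraint, and that multiplying by $(w+1)^{-2}$ in the half‑plane exactly repairs the weighted bound $|g'|\le\frac2{|w+1|^2}$. Concretely I would take $g(w)=\frac{c\,e^{-\beta w}}{(w+1)^2}$, $f(z)=g(\frac{1+z}{1-z})$, with $c\asymp\frac{\bar p/p}{\log(1/t)}$ and $\beta=\frac1{r_0}\log\frac{c(r_0+1)^2}{t}\asymp\frac{\log(1/t)}{\bar p/p}$, and then verify: $f$ is holomorphic on $D$ since the pole $w=-1$ avoids the closed half‑plane, with $f\to0$ at $z=1$; since $|g(w)|=c\,e^{-\beta\Re w}/|w+1|^2$ is decreasing in $\Re w\ge0$ and in $|\Im w|$, one gets $\|f\|_{H^\infty(D)}=g(0)=c$ and $\|f\|_{H^\infty(D_p)}=\frac{c\,e^{-\beta r_0}}{(r_0+1)^2}=t$ by the choice of $\beta$; and $|g'(w)|\le\frac{c(\beta+2)}{|w+1|^2}$ gives $\|f'\|_{H^\infty(D)}\le\frac{c(\beta+2)}{2}$, which is $\le1$ for $t$ small because a short computation shows $c(\beta+2)=1+o(1)$. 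This yields $\delta_{H^\infty}(t)\ge c=\Omega_p(1/\log(1/t))$, matching the upper bound up to $p$‑dependent constants and establishing $\delta_{H^\infty}(t)=\Theta_p(1/\log(1/t))$.
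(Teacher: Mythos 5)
Your proof is correct, and the two halves relate to the paper's argument differently. The lower bound is essentially the paper's: your $g(w)=c\,e^{-\beta w}/(w+1)^2$ is exactly the function in \eqref{eq:m18} written in half-plane coordinates (since $(1-z)^2=4/(w+1)^2$), with a slightly cleaner verification because you identify the exact suprema on $\{\Re w\ge 0\}$ and $\{\Re w\ge r_0\}$ rather than bounding them. The upper bound, however, is a genuinely different route. The paper passes to $g$, uses the Cauchy formula to get $\|g'\|_{H^\infty(\Re>2\bar p/p)}\le C_p t$, applies the three-lines theorem to $g'$ to obtain $\|g'\|_{H^\infty(\Re=\epsilon)}\le C_p t^{\epsilon p/(2\bar p)}$, and then integrates this bound horizontally, $\int_0^{\bar p/p}t^{\epsilon p/(2\bar p)}\,d\epsilon\asymp 1/\log(1/t)$. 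You instead interpolate $f$ itself between $D$ and $D_p$ via \eqref{eq:dl_ub5}, transfer back from the single intermediate horodisk $D_{1-u}$ to all of $D$ using the derivative bound and convexity, and close the resulting self-referential inequality $M\le M(t/M)^{\lambda}+2u$ by a bootstrap. Your diagnosis of why the naive version loses a $\log\log(1/t)$ factor, and the observation that retaining the factor $M^{1-\lambda}$ (using only the a priori bound $M\le 2$ to guarantee $\log(M/t)\gtrsim\log(1/t)$) recovers the sharp rate, is the right fix and is a nice alternative to the paper's continuum-of-lines integration; the paper's version has the advantage of producing the bound in one pass without a contradiction argument, while yours avoids differentiating $g$ altogether. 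Both give constants depending on $p$ (e.g.\ your choice of $u$ requires $\log(1/t)\gtrsim 1/p$), which is all the $\Theta_p$ statement demands.
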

	\begin{proof}
		For the upper bound,  as before we reparameterize $f(z)=g\pth{w}$ with $w={1+z\over 1-z}$. 
		Then \prettyref{eq:gfHinfty} with $r=1/p-1$ implies that $\| g\|_{H^\infty(\Re > \bar{p}/p)} = \| f\|_{H^\infty(\bar{p}+pD)} \le t$.
		By Cauchy's integral formula, we conclude that for some constant $C_{p}$ (here and below possibly
	different on each line) we have 
	%$$ \|g'\|_{H^\infty(\Re > 2 \bar{p}/p)} \le C_{{p}} t\,.$$
	$ \|g'\|_{H^\infty(\Re > 2 \bar{p}/p)} \le C_{{p}} t$.
	
	Note that $g'(w) = \frac{2}{(1+w)^2} f'(\frac{w-1}{w+1})$.
	Applying \prettyref{eq:gfHinfty} again with $r=0$ yields $\|g'\|_{H^\infty(\Re > 0)} \leq 2$. 
	Thus from Hadamard's three lines theorem we conclude for any $\epsilon\in(0,\bar p/p)$,
	$ \|g'\|_{H^\infty(\Re = \epsilon)} \le C_{{p}} t^{\min\sth{\epsilon p/(2{\bar{p}}), 1}}$.
	
	Finally, for any $\omega \in \mreals$, integrating the derivative horizontally yields:
	$$ |g(i\omega) - g(i\omega + {\bar{p}}/p)| \le C_{p} \int_0^{{\bar{p}}/p} t^{\epsilon
		p/(2{\bar{p}})} d\epsilon \le C_{p} {1\over \log {1\over t}} $$
	Since $|g(i\omega + {\bar{p}}/p)| \le \| g\|_{H^\infty(\Re = \bar{p}/p)} \leq t$, we conclude that on $\{\Re = 0\}$ we have
	$$ \|g\|_{H^\infty(\Re = 0)} = \|f\|_{H^\infty(D)} \le C_{p} {1\over \log {1\over t}} \,,$$
	proving the upper bound part.	
	
	For the lower bound, consider the following function
	%\footnote{Another choice could be $h(z) = (1-\alpha) g(\alpha z)$ with
	%$\alpha = 1-{1\over \log(1/t)}$ and $g(z) = t^{{p\over \bar{p}} {1+z\over 1-z}}$.}
	\begin{align}
	f(z) = {c_p\over \log {(1/t)}} (1-z)^2 t^{{p\over \bar{p}} {1+z\over 1-z}}\label{eq:m18}
	\end{align} 
	for some constant $c_p>0$. Then using \eqref{eq:gfHinfty} we have
	$\|f\|_{H^\infty(\bar p+pD)}\leq {4c_p\over \log(1/t)}\sup_{z\in \bar p+pD} |t^{{p\over \bar p}{1+z\over 1-z}}|=
	%{4c_p\over \log(1/t)}t^{{p\over \bar p}\cdot {\bar p\over p}}
	{4c_p t\over \log(1/t)}$, 
	%\begin{align}\label{eq:m15}
	%\|f\|_{H^\infty(\bar p+pD)}\leq {4c_p\over \log(1/t)}\sup_{z\in \bar p+pD} \Big|t^{{p\over \bar p}{1+z\over 1-z}}\Big|\overset{\eqref{eq:gfHinfty}}{=} 
	%%{4c_p\over \log(1/t)}t^{{p\over \bar p}\cdot {\bar p\over p}}
	%{4c_p t\over \log(1/t)}
	%\end{align}
	and 
	\begin{align*}
	\|f'\|_{H^\infty(D)}&= c_p\left\|-{2\over \log(1/t)}(1-z)t^{{p\over \bar p}{1+z\over 1-z}}-{2p\over \bar p}t^{{p\over \bar p}{1+z\over 1-z}}\right\|_{H^\infty(D)}\nonumber \\
	&{\leq} c_p\pth{{4\over \log (1/t)}+{2p\over \bar p}}\left\|t^{{p\over \bar p}{1+z\over 1-z}}\right\|_{H^\infty(D)} 
	\overset{\eqref{eq:gfHinfty}}{=} c_p\pth{{4\over \log (1/t)}+{2p\over \bar p}}
	\leq {2c_p(1+\bar p)\over \bar p}
	%\label{eq:m17}
	\end{align*}
	where the last inequality follows from $\log(1/t)\geq 1$ for all small $t$. 
	%In view of \eqref{eq:m15} and \eqref{eq:m17} we have that $f$ is feasible for $\delta_{H^\infty}(t)$ when $c_p$ is small. 
	This shows $f$ is feasible for $\delta_{H^\infty}(t)$ for small $c_p$. 
	Finally noticing that $\|f\|_{H^{\infty}(D)}\geq |f(-1)|={c_p\over \log(1/t)}$
	concludes the proof.
	\end{proof}
	
	Next we modify \eqref{eq:m18} to produce a feasible solution for $\delta_*(t)$ leading to the following lower bound, 
which, in view of \prettyref{lmm:delta*}, provides the required bound in \eqref{eq: deltaTV lb} on $\deltaTV(t)$.

	\begin{lemma}\label{lmm:delta*-lb}
		There exist absolute constants $C>0$ and $\tilde\beta_0>0$ such that for all $t>0$ and $p\in[0,1)$,
			\begin{align}\label{eq:delta*-lb}
			\delta_*(t)\geq {C\over \tilde \beta}, \quad \tilde\beta \eqdef \max\left( {p\over 1-p} \log {1\over t}, \sqrt{\log {1\over t}\over
			1-p}\right)
			\end{align} 
			provided that $\tilde \beta \ge \tilde \beta_0$.
	\end{lemma}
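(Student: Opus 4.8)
The plan is to reduce to $\delta_*(t)$ via \prettyref{lmm:delta*} and to produce a near-optimal feasible sequence for $\delta_*(t)$ by modifying the extremal function~\eqref{eq:m18} of the $H^\infty$-relaxation. Recall from~\eqref{eq:deltas_redef} that it suffices to exhibit $f$ holomorphic on $D$ with $\|f'\|_A\le 1$, $\|Pf\|_A\le t$, and $\|f\|_A$ as large as possible, where $(Pf)(z)=f(pz+\bar p)$. Writing $t^{\frac p{\bar p}\frac{1+z}{1-z}}=e^{\mu}e^{-2\mu/(1-z)}$ with $\mu\eqdef\frac p{\bar p}\log\frac1t$, the function~\eqref{eq:m18} has $H^\infty$-norms of the right order but infinite $A$-norms: the Taylor coefficients of $(1-z)^2e^{-2\mu/(1-z)}$ decay only like $\ell^{-7/4}$, so those of $f'$ decay like $\ell^{-3/4}$ and $\|f'\|_A=\infty$. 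The fix is to take instead
\[
 f(z)=\lambda\,(1-z)^3\, e^{-\mu\frac{1+z}{1-z}},
\]
where now $\mu$ is a free parameter (to be chosen $\gtrsim \frac p{\bar p}\log\frac1t$) and $\lambda>0$ is a normalization; the extra factor $(1-z)$ accelerates the coefficient decay of $f$ to $\ell^{-9/4}$ and of $f'$ to $\ell^{-5/4}$, so all three $A$-norms are finite and no truncation is needed (and for $p=0$ one has $Pf\equiv 0$ automatically).

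The core of the argument is a set of estimates for $\|(1-z)^j e^{-\mu\frac{1+z}{1-z}}\|_A$ and $\|(1-z)^{j-1}[j(1-z)+2\mu]e^{-\mu\frac{1+z}{1-z}}\|_A$. Using $e^{-\mu\frac{1+z}{1-z}}=e^{\mu}e^{-2\mu/(1-z)}$ together with the Laguerre identity $[z^\ell]e^{-2\mu/(1-z)}=e^{-2\mu}\big(L_\ell(2\mu)-L_{\ell-1}(2\mu)\big)$, these become sums $\sum_\ell \ell^s\,|[z^\ell]((1-z)^j e^{-2\mu/(1-z)})|$. Fejér/Plancherel–Rotach asymptotics for $L_\ell(2\mu)$ near the turning point $\ell\approx\mu$ (this is exactly the ``sudden jump'' in coefficient magnitude at $\ell/\mu\approx\tfrac12$ discussed in the remark after the upper-bound proof) show that for each fixed $j\ge1$ and $\mu$ large, $\|(1-z)^j e^{-\mu\frac{1+z}{1-z}}\|_A\asymp \mu^{1/2}$, the dominant contribution coming from a band of $\Theta(\mu)$ indices around $\mu$ on which the coefficients have magnitude $\asymp\mu^{-1/2}e^{-\mu}$. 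Consequently
\[
 \|f\|_A\asymp \lambda\,\mu^{1/2},\qquad \|f'\|_A=\lambda\,\big\|(1-z)\big[3(1-z)+2\mu\big]e^{-\mu\frac{1+z}{1-z}}\big\|_A\asymp \lambda\,\mu^{3/2},
\]
so choosing $\lambda$ a suitable constant times $\mu^{-3/2}$ forces $\|f'\|_A\le1$ while $\|f\|_A\gtrsim\mu^{-1}$.

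It remains to verify the convolution constraint. Since $Pf(z)=\lambda p^3 e^{\mu}(1-z)^3e^{-2\mu'/(1-z)}$ with $\mu'\eqdef\mu/p$, the same estimate gives $\|Pf\|_A\asymp\lambda p^3 e^{\mu-\mu'}(\mu')^{1/2}\asymp \frac{p^{5/2}}{\mu}\,e^{-\mu\bar p/p}$, which is $\le t$ as soon as $\mu\,\frac{\bar p}{p}\gtrsim\log\frac1t$, i.e.\ $\mu\gtrsim\frac p{\bar p}\log\frac1t$. We then take
\[
 \mu=C\,\tilde\beta=C\max\pth{\frac p{1-p}\log\frac1t,\ \sqrt{\frac{\log(1/t)}{1-p}}}
\]
for an absolute constant $C\ge1$. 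One checks $\mu\,\frac{\bar p}{p}\ge\log\frac1t$ in both branches: directly in the first, and in the second using $1-p\ge p^2\log\frac1t$, which is exactly the condition $\sqrt{m/p}\ge m$ defining that branch. The hypothesis $\tilde\beta\ge\tilde\beta_0$ guarantees $\mu$ is large enough for the Laguerre asymptotics to hold. This yields $\delta_*(t)\ge\|f\|_A\gtrsim\mu^{-1}\asymp 1/\tilde\beta$, as claimed; combined with \prettyref{lmm:delta*} this also gives the stated bound on $\deltaTV$.

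The hard part is the family of $A$-norm estimates. In contrast with the $H^\infty$-relaxation, where only $\|Pf\|_{H^\infty(D)}$ had to be small, here we must control $\|Pf\|_A$ itself, and this — like the estimate $\|f\|_A\asymp\mu^{-1}$ — is governed by the precise behavior of $L_\ell(2\mu)$ in the oscillatory/turning-point regime $\ell\asymp\mu$, which is what pins the relevant coefficient sums at order $\mu^{1/2}$ rather than a larger power of $\mu$; carrying this through with absolute (rather than $p$-dependent) constants is the technical crux.
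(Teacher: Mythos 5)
Your overall strategy is the same as the paper's---start from the $H^\infty$-extremizer $e^{-\beta\frac{1+z}{1-z}}$ of \eqref{eq:m18}, repair its infinite $\|\cdot\|_A$-norms, and extract the coefficient lower bound from Plancherel--Rotach asymptotics of $L_n^{(-1)}$---but your repair mechanism differs in a way that matters. The paper dilates radially, $h(z)=\tilde h(\alpha z)$ with $\alpha=1-1/\beta$ as in \eqref{eq:hz}, which makes $h$ holomorphic on a disk of radius $1/\alpha>1$; the two \emph{upper} bounds \eqref{eq:h3}--\eqref{eq:h4} on $\|h(p\cdot+\bar p)\|_A$ and $\|h'\|_A$ then follow from the soft Plancherel/Cauchy--Schwarz estimate \eqref{eq:atod}, and Laguerre asymptotics are needed only for the single \emph{lower} bound \eqref{eq:h2}, via the ``two consecutive coefficients cannot both be small'' argument of \prettyref{lmm:lag}. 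Your prefactor $(1-z)^3$ keeps the essential singularity at $z=1$, so \eqref{eq:atod} is unavailable and all three norms---including the two upper bounds---must be pushed through the detailed coefficient asymptotics. That is the trade you are making, and you have correctly identified it as the crux; the final parameter choice $\mu\asymp\tilde\beta$ and the verification that $\mu\bar p/p\ge\log(1/t)$ in both branches of the max are fine and mirror the paper's computation.

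The gap is that the asserted two-sided estimate $\|(1-z)^j e^{-\mu\frac{1+z}{1-z}}\|_A\asymp\mu^{1/2}$ (with absolute constants, uniformly in $\mu$ large, and likewise for the derivative combination) is not proved, and it carries essentially the entire content of the lemma. For the upper-bound half you need uniform control of the $j$-fold finite differences of $e^{-\mu}L_n^{(-1)}(2\mu)$ over \emph{all} $n$: the formula \eqref{eq:Plancherel} requires $\phi$ bounded away from $0$, i.e.\ $n\ge(\tfrac12+\delta)\mu$, so the turning-point region $n\approx\mu/2$ needs a separate (Airy-type or monotonicity) argument, the region $n<\mu/2$ needs the exponential-smallness estimate, and in the oscillatory tail the claimed gain of $(\mu/n)^{1/2}$ per factor of $(1-z)$ requires differencing the main term while showing the (merely bounded, not smooth-in-$n$) error term $O_\epsilon((n\mu)^{-1/2})$ stays subdominant. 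For the lower-bound half, the band coefficients are now \emph{third} finite differences of Laguerre polynomials, so the anti-concentration argument of \prettyref{lmm:lag} must be redone for $\Delta^3$ of an oscillating sequence whose phase increment $\pi-2\phi_m$ is bounded away from $0$ and $\pi$. All of this is believable and, I expect, provable, but none of it is in your write-up, and it is strictly more work than the paper's route---the dilation device exists precisely so that only one of the three estimates has to touch the hard asymptotics. As it stands the proposal is a correct plan with its core left as an assertion, not a proof.
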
	
		%In view of \prettyref{lmm:delta*} the above result provides us with the required bound on $\deltaTV(t)$, as
	%stated in \eqref{eq: deltaTV lb}, for some absolute constant $C_4$. 

	\begin{proof}
		 Fix $p,t\in (0,1)$. Considering~\eqref{eq:deltas_redef} our goal is to find a feasible function
		 and bound its $\|\cdot\|_A$ norm from below. 
		Our main tool for converting between the $\|\cdot\|_A$ norms in the definition~\eqref{eq:deltas_redef} and the
		more convenient $H^\infty$ norms is the following general result complementing \prettyref{eq:HAnorm}: For any $r>1$,
		\begin{equation}\label{eq:atod}
			\|f\|_A \le {1\over \sqrt{1-r^{-2}}} \|f\|_{H^\infty(rD)}\,. 
		\end{equation}		
		Indeed, let $f(z) = \sum_{n\ge 0} a_n z^n$ and let $\tilde f(z) = \sum_{n\ge 0} \tilde a_n z^n$ with
		$\tilde a_n =  a_n r^{n}$ and thus $\tilde f(z) = f(rz)$. From
		the Plancherel identity we have
		$$ \sum_n |\tilde a_n|^2  = {1\over 2\pi} \int_0^{2\pi} |\tilde f(e^{i \omega})|^2 d\omega \le \|\tilde
		f\|_{H^\infty(D)}^2 = \|f\|_{H^\infty(rD)}^2\,.$$
		Thus, \eqref{eq:atod} follows from an application of Cauchy-Schwarz inequality:
		$$ \sum_n |a_n| = \sum_n r^{-n} |\tilde a_n| \le \sqrt{\sum_{n\ge 0} r^{-2n}} \|f\|_{H^\infty(rD)} = 
			 {1\over \sqrt{1-r^{-2}}} \|f\|_{H^\infty(rD)}. $$

		Next, fix some $\beta \ge \beta_0$ and $\tau \in (0,1)$, where $\beta_0\ge 1$ is a numeric constant to be specified later, and let
		$\alpha=1-\tau \in (0,1)$. 
		Consider the function, a modified version of \eqref{eq:m18}, given by 
		\begin{equation}
		h(z)=\tilde h(\alpha z), \qquad \tilde h(z) = \exp\pth{-\beta \frac{1+z}{1-z}}.
		\label{eq:hz}
		\end{equation} 
		Using \prettyref{eq:gfHinfty}, we can explicitly calculate that for any $0<q\le 1$:
		%Similar to \prettyref{eq:gfHinfty}, we have for any $0<q\le 1$:
		\begin{equation}\label{eq:horo}
			\|\tilde h\|_{H^{\infty}(1-q+qD)} = e^{-\beta {1-q\over q}}\,.
		\end{equation}		

		We will show below the following estimates (all positive numerical constants below, i.e. those that are independent of
		parameters $p,t,\beta$, are denoted by a common symbol $C$):
		%\footnote{We do not really
		%need~\eqref{eq:h1}, and only include it to demonstrate that \eqref{eq:h2} is essentially tight.}
		\begin{align} 
		   %\|h\|_A &\le \tau^{-\frac12} \label{eq:h1}\\
		   \|h\|_A &\ge C \sqrt{\beta} (1-\tau)^{3\beta\over 2}\label{eq:h2}\\
		   \|h(p\cdot +\bar p)\|_A &\le \tau^{-\frac12} e^{-\beta E},\, \qquad E \eqdef {\bar \tau \bar p \over p+\bar p \tau} \label{eq:h3}\\
		   \|h'\|_A &\le 2 \tau^{-\frac32}\,.\label{eq:h4}
		\end{align}
		%Note also that $h(1) = e^{-\beta{2-\bar\alpha\over \bar\alpha}} \le e^{-\beta^2/p} \le e^{-\beta^2}$.
		%This implies that $\beta\ge \beta_0$ can be taken so large that $e^{-\beta^2} \le C \sqrt{\beta}/2$
		%and also so large that
		%$$ e^{-\beta^2/p} \le {1\over 2} \sqrt{\beta \over p} e^{-\beta {\bar p \over 2p}}\,.$$
		%Indeed, this follows since the RHS can be lower-bounded as $e^{-\beta/(2p)}$ (assuming $\beta > 4$).
		%In all, we conclude from triangle inequality that estimates~\eqref{eq:h1}-\eqref{eq:h4} also hold (with
		%slightly different numerical constants) for the function $h(z) -h(1)$. 
		
		Thus, taking $f(z) = \frac12 \tau^{\frac32}h(z)$ in~\eqref{eq:deltas_redef} proves that for all $\beta > \beta_0$ we have
		\begin{equation}\label{eq:h5}
			\delta_*\left({\tau\over 2} e^{-\beta E}\right) \ge C \sqrt{\beta \tau^3} (1-\tau)^{3\beta\over
			2}
		\end{equation}
		To show that~\eqref{eq:h5} implies~\eqref{eq:delta*-lb} we set $\tau = {1\over \beta}$ and thus the last
		term in~\eqref{eq:h5} can be lower bounded by $(1-1/\beta_0)^{3\beta_0/2}$ and be absorbed into $C$.
		Notice also that if $\beta\ge 2$ then $\bar \tau \ge 1/2$ and thus $ E \ge {\bar p\over 2} {1\over
		{1\over \beta} + p}$. Since $\tau \le 1$ and $\delta_*$ is monotone in its argument we can simplify
		\begin{equation}\label{eq:h5b}
			\delta_*\left(\exp\left\{-{\beta \over {1\over \beta} + p} {\bar p\over 2} \right\}\right) \ge {C\over \beta} 
		\end{equation}
		Note next that for any $\mu,p>0$, taking $x=\max(\mu p, \sqrt{\mu})$ implies
			$ {x \over {1\over x} + p} \ge {\mu \over 2}$,
			which is verified by considering the two cases $\mu p \lessgtr \sqrt{\mu}$ separately. Then, taking
				$$ \beta = \max(\mu p, \sqrt{\mu}), \qquad \mu \eqdef {4\over \bar p} \log {1\over t} $$
			ensures the argument of $\delta_*$ in~\eqref{eq:h5b} is at most $t$. In summary, we obtain the
			bound~\eqref{eq:delta*-lb} for all $t\le t_0$.

		We proceed to proving~\eqref{eq:h2}-\eqref{eq:h4}. 
		%For~\eqref{eq:h1} we apply~\eqref{eq:atod} with
		%$r={1\over \alpha}$ to get
			%$ \|h\|_A \le \frac{1}{\sqrt{1-\alpha^2}} \leq \frac{1}{\sqrt{1-\alpha}}$,
			%where we also used $\|h\|_{H^\infty(D/\alpha)}=\|\tilde h\|_{H^\infty(D)} = 1$
		%via~\eqref{eq:horo} with $q=1$. 
		%Applying $1-\alpha^2 \ge 1-\alpha$ we get~\eqref{eq:h1}.
		For~\eqref{eq:h3} we set $r={1-\alpha \bar p\over \alpha p}$ in~\eqref{eq:atod} and get
		\begin{align*} \|h(p\cdot +\bar p)\|_A &\le c \|h(p\cdot +\bar p)\|_{H^\infty(rD)}
					= c \|h\|_{H^\infty(\bar p + prD)} 
					= c e^{-\beta {\alpha \bar p\over 1-\alpha \bar p}}\,,
		\end{align*}					
		where we denoted $c = \sqrt{1\over 1-r^{-2}}$ and also applied~\eqref{eq:horo} with $q=\alpha pr =
		1-\alpha \bar p$. We next bound 
		$ c \le {1\over \sqrt{1-r^{-1}}} = \sqrt{1-\alpha \bar p\over 1-\alpha} \le \frac{1}{\sqrt{1-\alpha}}$.

		For~\eqref{eq:h4} we first notice that for any function $f$ holomorphic on $r_2 D$ we can estimate its
		derivative on $r_1 D$, where $r_1<r_2$ via Cauchy integral formula as
		$$ \|f'\|_{H^\infty(r_1D)} \le {1\over r_2 - r_1} \|f\|_{H^\infty(r_2 D)}\,.$$
		Applying this with $f=h$, $r_1 = {1+r_2\over 2}$ and $r_2 = {1\over \alpha}$ we get 
		$$ \|h'\|_{H^\infty(r_2 D)} \le \sqrt{2\over \alpha^{-1}-1} \|h\|_{H^\infty(D/\alpha)} = \sqrt{2\over
		\alpha^{-1}-1}$$
		last step being again via~\eqref{eq:horo} with $q=1$. Applying now~\eqref{eq:atod} with $r=r_2$ we
		obtain overall
		$$ \|h'\|_A \le {2\alpha \over (1-\alpha) \sqrt{1-\alpha^2}} \le {2\over (1-\alpha)^{3/2}}. $$
		%proving~\eqref{eq:h4}.

		To show~\eqref{eq:h2}, we need to analyze the Taylor coefficients of $h$ explicitly as the $H^\infty$-norm bound is too weak. A natural and
		straightforward way is to apply the saddle-point method to study these coefficients. However, due to the special nature of
		$h$ its coefficients have already been well understood. Indeed, in~\cite[5.1.9]{Szego}) it shown that 
		for each $x\in\complex$ and $|v|<1$ 
		%The sequence of functions $\sth{L_{n}^u}_{n\geq 0}$ satisfies the following identity 
		\begin{align}
		{e^{-x\frac{v}{1-v}}} = \sum_{n=0}^\infty v^nL_n^{(-1)}(x)\,,\label{eq:generating}
		\end{align}
		where $L_n^{(-1)}(x)$ are \textit{generalized Laguerre polynomial} of degree $n$. We will not need explicit formulae of
		these polynomials and only rely on their asymptotics (of Plancherel-Rotach type),
		cf.~\cite[8.22.9]{Szego}:
		For each $\epsilon>0$ there exists a $C_\epsilon>0$ such that for any $n\ge 0$, any $\epsilon\leq
		\phi\leq \pi/2-\epsilon n^{-1}$, we have 
		\begin{align}
		L_n^{(-1)}(x)&= e^{x/2} (-1)^n(\pi \sin \phi)^{-1/2}x^{1/4}n^{-3/4}\nonumber \\
		&\sth{\sin[n(\sin\ 2\phi -2\phi)+3\pi/4]+(nx)^{-1/2}O_{\epsilon}(1)}, \label{eq:Plancherel}
		\end{align}
		where $x = 4n \cos^2 \phi$ and the $O_\epsilon(1)$ is uniformly bounded by $C_\epsilon$ for all $n$
		and $\phi$.

		Comparing~\eqref{eq:generating} with the definition of $h$ we get
		$h(z)=e^{-\beta} \sum_{m\geq 0} L_m^{-1}(2\beta)z^m\alpha^m$.
		%\begin{align}\label{eq:generating h}
		%h(z)=e^{-\beta \frac{1+\alpha z}{1-\alpha z}}=e^{-\beta}e^{-2\beta \frac{\alpha z}{1-\alpha z}}=e^{-\beta} \sum_{m\geq 0} L_m^{-1}(2\beta)z^m\alpha^m
		%\end{align}
		In other words, if we denote the $m$-th coefficient of $h(z)$ by $\Delta_m$, then 
		\begin{equation}\label{eq:h7}
			\Delta_m = e^{-\beta} \alpha^m L_m^{-1}(2\beta)\,.
		\end{equation}
			Due to the oscillatory nature of the Laguerre polynomial, it is not possible to bound $|\Delta_m|$ away from zero.
			Nevertheless, the following lemma shows that two consecutive terms cannot be simultaneously small:
			\begin{lemma}
			\label{lmm:lag}	
		For all $m \in (\beta,3\beta/2)$ and for sufficiently large $\beta$, 
		\begin{equation}\label{eq:h8}
			|\Delta_m| + |\Delta_{m+1}| \ge \alpha^{3\beta/2} \beta^{-1/2} {\sqrt{2}\over 6} \,.
		\end{equation}		
			\end{lemma}
		From here~\eqref{eq:h2} follows simply by
		$ \|h\|_A \ge \sum_{\beta \le m \le 3\beta/2} |\Delta_m| \ge \alpha^{3\beta/2} {\sqrt{2\beta}\over 24} $.
			We note that the estimate \eqref{eq:h2} is tight. Indeed, applying~\eqref{eq:atod} with
		$r={1\over \alpha}$ yields
			$ \|h\|_A \le \frac{1}{\sqrt{1-\alpha^2}} \leq 1/\sqrt{\tau}$,
			where we also used $\|h\|_{H^\infty(D/\alpha)}=\|\tilde h\|_{H^\infty(D)} = 1$
		via~\eqref{eq:horo} with $q=1$.
\end{proof}

%\section{Species}
%
%TODO:
%\begin{itemize}
%	\item Write up a section for species extrapolation from $n$ to next $rn$ iid observations. There are several possible definitions: a) learn composition (in the
%	sense of $\mu, \mu^{\#}, \pi$) of the unseen part; b) learn composition of the entire $(1+r)n$ observations. 
%	\item By urn achievability we know that we can learn composition of entire $(1+r)n$ in the sense of $\pi$ with rate
%	${1\over \sqrt{\log n}}$
%	\item Learning in terms of $\pi$ implies learning of all functionals like $H_f(\mu)= \sum_i f(\mu_{i})$ for bounded  $f$.
%	What about unbounded (e.g. entropy), is there a lower bound?
%\end{itemize}

%\section{Achieving optimal rate via smoothing estimators}

%\bibiographystyle{abbr} -- COLT2020 style defines the bibstyle itself!
%\bibliography{IEEEabrv,refs_new,strings,reports,wu_big}

\acks{The authors thank C. Daskalakis for pointing out~\cite{valiant2013estimating} and G.~Valiant for communicating \cite{GValiant-email_2019}.}

\appendix
\section{Discussions}
	\label{app:discuss}

%\subsection{Connection to estimating sorted distribution }
\subsection{Comparison with previous results}
	\label{app:sorted}

In this section we review previous results on estimating sorted distribution or profile under different loss function and different sampling model. 
	To this end, let us consider an urn with exactly $k$ balls. Then its composition can be described by the distribution $\mu$ on
	$[k]$ with $\mu(x) = \theta_j/k$. When we go from $\mu$ to $\pi$ we erase the ``color labels'' (i.e., if the 
	balls in the urn are arranged as piles of distinct colors, going from $\mu$ to $\pi$ is analogous to turning off the
	lights so that only the heights of each	pile, but not their colors, are shown). This could have been done in a different way by sorting $\mu$. Namely, let us define
	$$ \mu^{\downarrow}_{i} = i\mbox{-th largest atom of $\mu$}. $$
	Note that $\pi$ and $\mu^{\downarrow}$ can be expressed in terms of one another. \apxonly{However, the inverse map (from $\pi$ to
	$\mu^{\downarrow}$) is very ill-conditioned. For example, consider $\mu^1 = \mu^{1\downarrow} = \delta_1$ (all balls of color 1)
	and $\mu^2 = \mu^{2\downarrow} = 1/2 \delta_1 + 1/2 \delta_2$ (half-half of two colors). Then, $\pi^1 = (1-\frac{1}{k})
	\delta_0 + \frac{1}{k} \delta_k, \pi^2 = (1-\frac{2}{k})\delta_0 + \frac{2}{k} \delta_{k/2}$. So we get
	$$ \|\mu^1 - \mu^2\|_{\TV} = \|\mu^{1\downarrow} - \mu^{2\downarrow}\|_{\TV} =  1/2, \|\pi^1 - \pi^2\|_{\TV} =
	\frac{2}{k}\,. $$
	In the opposite direction, we can easily show:} In fact we have
	\begin{equation}\label{eq:mu_pi}
	\|\pi^1 - \pi^2 \|_{\TV} \le 2 \|\mu^{1\downarrow} - \mu^{2\downarrow}\|_{\TV} \le 2 \|\mu^1 - \mu^2\|_{\TV} 
	\end{equation}
	Indeed, the second inequality follows from the fact that decreasing rearrangement minimizes the
	$\ell_1$-distance.\apxonly{(I did it by showing that if $\mu^1$ is sorted then interchanging two atoms of $\mu^2$
	which are not in decreasing order always decreases TV.)} To prove the first inequality, note that 
\begin{equation}\label{eq:tv_w1}
			2 \|\mu^{1\downarrow} - \mu^{2\downarrow}\|_{\TV} = \sum_{j} \left|\sum_{i\ge j} \pi^1_{i} - \pi^2_i \right| =
		W_1(\pi^1, \pi^2)\,.
\end{equation}	
where $W_1$ denotes the 1-Wasserstein distance between probability distributions and, in one dimension, coincides with the $L_1$-distance between the cumulative distribution functions (CDFs). 
	Since $\pi^1,\pi^2$ are supported on $\mathbb{Z}$, the indicator function $1_E$ is $1$-Lipschitz for any
	$E\subset \mathbb{Z}$ and thus $W_1(\pi^1,\pi^2) \ge \|\pi^1 - \pi^2\|_{\TV}$.

	Can one estimate $\mu^{\downarrow}$ from the sample $X$? The answer is yes, in both $\ell_\infty$ and $\ell_1$ (TV),
	as well as other metrics. However, to discuss these results let us move to the setting of Robbins Problem II. 
	Namely, suppose we have $Z^M=(Z_1,\ldots,Z_{M}) \simiid \mu$ with $\mu$ some arbitrary distribution on $[k]$.
	The relevance to the Bernoulli sampling model comes from the following simple reduction: if $\mu$ is in fact the empirical
	distribution of colors, then given $\calN$, which corresponds a sample of size $M' \sim \Bino(k,p)$ from $\mu$
	without replacement, one can simulate an iid sample $Z_1,\ldots,Z_M$ with $M \approx (1-e^{-\bar{p}})k$.
	Hence, any result regarding estimating $\mu^{\downarrow}$ from $Z^M$ with $M=\Theta(k)$ implies a
	similar result about estimating $\mu^{\downarrow}$ from $\calN$ with $p = \Theta(1)$.

	\apxonly{Orig info:
	This corresponds to sampling $M$ balls from our urn with replacement, whereas our original formulation is in terms of
	sampling without replacement (and drawing $M' \sim \Bino(k, p)$ balls so). It turns out that from $M'\approx
	p k$ samples without replacement one can perfectly simulate a sample of $M\approx (1-e^{-\bar{p}})k$ samples
	with replacement. (TODO: add argument about forgetting the ball ids.)}
	
	We review several results regarding estimating $\mu^{\downarrow}$ from $Z^M$ when $\mu$ is general. The pioneering
	result~\cite{orlitsky2005convergence} only showed consistency, i.e. existence of estimator $\widehat {\mu^{\downarrow}}$
	such that
	$$ \Expect\|\widehat {\mu^{\downarrow}} - \mu^{\downarrow}\|_{\TV} \to 0 $$
	without convergence rate.	In a later draft~\cite{orlitsky2008estimating} (see also~\cite[Lemma
	3]{anevski2017estimating} for a short proof) it was shown that simply estimating $\mu^{\downarrow}$ by a sorted
	empirical distribution achieves 
	$$ \EE[\|\widehat {\mu^{\downarrow}} - \mu^{\downarrow}\|_{\infty}] = O(k^{-{1\over 2}} \log k)\,. $$

	A much more relevant result to us, however, is the one in~\cite{valiant2013estimating}. For any two
	$\pi^1,\pi^2$ they defined yet another distance:
\begin{equation}\label{eq:vvr_def}
		D(\pi^1,\pi^2) = \inf_{\nu} \EE[|\ln X_1 - \ln X_2|]\,,
\end{equation}	
	where the infimum is over all couplings of $X_1$ and $X_2$ distributed on $\mathbb{Z}_+$ as 
	$\PP[X_i = j] = j\pi^i_j$ for $i\in\{1,2\}, j \in [k]$. \apxonly{(I.e. $R(\pi^1,\pi^2) = W_1(p_1^@, p_2^@)$
	where $p_i^@(\ln j)=j\pi^i_j$.)} They have shown that when $M=a {k\over \log k}$ one can get 
	$$ \EE[D(\hat \pi, \pi)] \le O\left({1\over \sqrt{a}}\right),$$
	which, per~\cite{GValiant-email_2019}, also holds for $a= \Theta(\log k)$. In addition~\cite[Appendix
	B]{VV16_stoc} shows  $W_1(\pi^1,\pi^2) \le 2 D(\pi^1, \pi^2)$. 
	Indeed, let $\boldsymbol{\nu}(\cdot,\cdot)$ be the optimal coupling in~\eqref{eq:vvr_def}. Then define a coupling of $\pi^1$ to
	$\pi^2$ via
	$$ \tilde {\boldsymbol{\nu}}(j_1,j_2) = \begin{cases} {1\over \max(j_1,j_2)} \boldsymbol{\nu}(j_1, j_2), & j_1 \neq 0, j_2 \neq 0\\
					\sum_{j \ge j_1} \left({1\over j_1} - {1\over j}\right) \boldsymbol{\nu}(j_1,j), & j_2 = 0, j_1 > 0\\
					\sum_{j \ge j_2} \left({1\over j_2} - {1\over j}\right) \boldsymbol{\nu}(j,j_2), & j_1 = 0, j_2 > 0
				 \end{cases} $$
	and completing $j_1=j_2=0$ as required. Letting $(X_1,X_2) \sim \boldsymbol{\nu}$ and $(\tilde X_1, \tilde X_2) \sim \tilde
	{\boldsymbol{\nu}}$ we have that
	$$ \EE[|\tilde X_1 - \tilde X_2|] = 2 \EE[|\tilde X_1 - \tilde X_2|_+] = 2 \EE\left[{|X_1 - X_2|\over
	\max(X_1,X_2)}\right] \le 2 \EE[|\ln X_1 - \ln X_2|] = 2 D(\pi^1,\pi^2)\,.$$

	In all, putting everything together we have that Valiant and Valiant showed that there exists an estimator of
	$\mu^{\downarrow}$ from $M=\Theta(k)$ samples such that
	\begin{equation}\label{eq:jj_ach}
	\EE[\|\widehat {\mu^{\downarrow}} - \mu^{\downarrow}\|_{\TV}] = O\left({1\over \sqrt{\log k}}\right)\,.
	\end{equation}	
	In~\cite{han2018local} it was shown that this rate is minimax optimal over all distributions supported on $[k]$. Note, however, that since the lower bound in~\cite{han2018local} does not
	produce valid distributions on finite population (namely, $\mu$ with rational entries in ${1\over k} \integers$), it does imply that the rate of estimating $\pi$
	in $W_1$ is ${1\over \sqrt{\log k}}$, cf.~\eqref{eq:tv_w1}, is sharp.

	In all, we see that following the trailblazing work~\cite{orlitsky2005convergence} a number of works have established
	uniform convergence guarantees in various metrics. Relevant to us is that the best result available is 	$ \|\hat
	\pi - \pi\|_{\TV} \le O\left({1\over \sqrt{\log k}}\right)$, which can obtained by first simulating samples drawn with replacement based on those without replacements, then combining \eqref{eq:jj_ach} with \eqref{eq:mu_pi}. 
We show that this rate is suboptimal by a square root factor.

	\subsection{Open problems}
	For $1 \le q\le \infty$, let us define by $R_q(k)$  to be the minimax risk of estimating $\pi$ in the $\ell_q$-norm $\left(\sum_m
	|\pi_m - \hat\pi_m|^q\right)^{1\over q}$. Then in the linear regime of $p=\Theta(1)$, \prettyref{thm:main} shows that
		$$ \left({1\over \log k}\right)^{2-{1\over q}} \lesssim R_q(k) \lesssim {1\over \log k}\,,$$
		which is only tight for $q=1$. Our complex-analytic methods seem to be especially well suited for
		studying the case of $q=2$ and $q=\infty$, but we were not able to close the gap.
		The case of $\ell_\infty$ is of particularly interest as it concerns which individual profile is the hardest to estimate. Our result shows that for those colors that occur $m=\Theta(\log k)$ times, the corresponding $\pi_m$ is particularly difficult and cannot be estimated better than $\Omega(\frac{1}{(\log k)^2})$. It is unclear if this is the hardest case.

	Let us define by $R_{W_1}(k)$ to be the minimax risk of estimating $\pi$ in the 1-Wasserstein distance $W_1(\pi,\hat
	\pi)$. Given the equivalence~\eqref{eq:tv_w1}, estimate~\eqref{eq:jj_ach} and lower bound $W_1(\pi, \hat \pi)\ge
	\|\pi - \hat \pi\|_{\TV}$ we get
		$$ {1\over \log k}\lesssim R_{W_1}(k) \lesssim {1\over \sqrt{\log k}}\,.$$
	Due to $W_1$ being the $L_1$-distance between the CDFs, the minimax $W_1$ risk are also amenable to complex-analytic techniques, but so far resisted our attempts. 
	An alternative approach is to generalize the $W_1$-lower bound construction of \cite{han2018local}; however, as observed in previous work in the distinct elements problem \cite{GV-thesis,WY16-distinct} such moment-based construct is difficult to extend to finite population.
	%\apxonly{(with $\|f\|_A$ norm in~\eqref{eq:deltas_redef} replaced by $\|f/(1-z)\|_A$). }

\section{Impossibility of learning the empirical distribution}
	\label{app:mu}

In this section we show that unless we observe all but a vanishing fraction of the urn, it is impossible to estimate the empirical distribution of the colors consistently.
To this end, consider a $k$-ball urn and let $\mu$ denote the empirical distribution of the colors, with $\mu(j) = \frac{\theta_j}{k} , j \in [k]$.
Compared to the profile $\pi$ which is a distribution on $\integers_+$, here $\mu$ is a probability measure on the set of colors $[k]$. 
Similar to \prettyref{eq:Rk}, we define the minimax TV risk for estimating $\mu$:
	$$
	\tilde R(k)=\inf_{\hat{\mu}}\sup_{\mu}\EE[\|\mu-\hat\mu\|_{\TV}].
	$$
The following theorem shows that whenever the sampling ratio $p$ is bounded away from one, it is impossible to estimate $\mu$ consistently.
This observation agrees with the typical behavior in high-dimensional estimation that, absence any structural assumptions, the sample size need to exceed the number of parameters to achieve consistency.
%; however, the result does not follow from existing minimax TV rate for estimating a distribution on large alpahbet (cf.~e.g.~\cite{@inproceedings{kamath2015learning}) due to the following differences:
%(a) here the parameters of the finite-population model are discrete, and 
%(b) in the sampling without replacement model the samples are no longer independent. 

\begin{theorem}
\label{thm:Rk-mu}
	\[
	\tilde R(k) \geq \frac{k-1}{4k} h^{-1}\pth{1-{p} - \frac{\log_2(k+1)}{k-1}} 
	\]
	where $h:[0,1]\to[0,1]$ given by $h(x) = -x \log_2 x-(1-x) \log_2(1-x)$ is the binary entropy function,
	and $h^{-1}$ is its inverse on $[0,\frac{1}{2}]$.
	Consequently, for any fixed $p<1$, $\tilde R(k) = \Omega(1)$.
\end{theorem}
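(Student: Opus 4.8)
I would prove this via an Assouad-type reduction to a Hamming cube combined with a rate--distortion converse; all entropies and mutual informations are taken in bits. Index urns by binary strings $c=(c_1,\dots,c_{k-1})\in\{0,1\}^{k-1}$: reserve color $k$ as an overflow bin holding one fixed ball, and for each $i\in[k-1]$ put $c_i$ balls of color $i$ and $1-c_i$ balls of color $k$. This is a legitimate $k$-ball urn, with empirical color distribution $\mu_c(i)=c_i/k$ for $i\le k-1$ and $\mu_c(k)=(k-\sum_i c_i)/k\ge 1/k$. Under Bernoulli$(p)$ sampling the ``slot-$i$ ball'' is revealed independently with probability $p$ (call its indicator $R_i$) and then displays color $i$ if $c_i=1$ and color $k$ if $c_i=0$; hence $X_i=c_iR_i$ for $i\le k-1$, while the overflow count is $X_k=R_0+\sum_{i\le k-1}(1-c_i)R_i$ with $R_0$ the reveal of the fixed ball.

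\textbf{Step 1: TV controls Hamming.} Given any estimator $\hat\mu=\hat\mu(X)$, threshold it to $\hat c_i\eqdef\indc{\hat\mu(i)\ge 1/(2k)}$ for $i\le k-1$. If $\hat c_i\ne c_i$ then $|\mu_c(i)-\hat\mu(i)|\ge 1/(2k)$, so $\|\mu_c-\hat\mu\|_{\TV}\ge\frac12\sum_{i\le k-1}|\mu_c(i)-\hat\mu(i)|\ge\frac1{4k}\|c-\hat c\|_1$, where $\|\cdot\|_1$ denotes Hamming distance on $\{0,1\}^{k-1}$. Averaging over $c\sim\Unif(\{0,1\}^{k-1})$ and using $\sup\ge\Expect$ reduces matters to a Bayes Hamming risk: $\tilde R(k)\ge\frac1{4k}\inf_{\hat c}\Expect\|c-\hat c\|_1=\frac{k-1}{4k}\inf_{\hat c}\bar d$, with $\bar d\eqdef\frac1{k-1}\Expect\|c-\hat c\|_1$ the average per-coordinate error.

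\textbf{Step 2: converse and the information bound.} Coordinatewise Fano plus concavity of $h$ give $H(c\mid\hat c)\le\sum_i h(\Prob[c_i\ne\hat c_i])\le(k-1)h(\bar d)$, so $I(c;\hat c)=(k-1)-H(c\mid\hat c)\ge(k-1)(1-h(\bar d))$, which by data processing along $c\to X\to\hat c$ is at most $I(c;X)$. To bound $I(c;X)$: for each $i\le k-1$, $X_i\in\{0,1\}$ and conditioning on $R_i$ gives $I(c_i;X_i)\le I(c_i;R_i)+\Expect_{R_i}[I(c_i;X_i\mid R_i)]=0+p\cdot 1+(1-p)\cdot 0=p$ (revealed $\Rightarrow X_i=c_i$; not revealed $\Rightarrow X_i=0$); since the pairs $(c_i,X_i)$ are mutually independent over $i$, $I(c;X_1,\dots,X_{k-1})\le(k-1)p$. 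The overflow coordinate adds at most $I(c;X_k\mid X_1,\dots,X_{k-1})\le H(X_k)\le\log_2(k+1)$ because $X_k\in\{0,\dots,k\}$. Hence $I(c;X)\le(k-1)p+\log_2(k+1)$, and combining with the converse yields $h(\bar d)\ge 1-p-\frac{\log_2(k+1)}{k-1}$. We may assume $\bar d\le\frac12$ (otherwise $\tilde R(k)>\frac{k-1}{8k}\ge\frac{k-1}{4k}h^{-1}(\cdot)$ since $h^{-1}\le\frac12$), so monotonicity of $h$ on $[0,\frac12]$ gives $\bar d\ge h^{-1}(1-p-\frac{\log_2(k+1)}{k-1})$; plugging into Step 1 proves the bound. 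Finally, for fixed $p<1$ the argument of $h^{-1}$ tends to $1-p>0$ and $\frac{k-1}{4k}\to\frac14$, so $\tilde R(k)\to\frac14 h^{-1}(1-p)>0$, i.e.\ $\Omega(1)$.

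\textbf{Main obstacle.} The rate--distortion converse and the TV-to-Hamming reduction are routine; the delicate point is designing the prior so that simultaneously (i) each $\mu_c$ is a bona fide empirical distribution of a $k$-ball urn (this forces the fixed overflow ball, guaranteeing $\mu_c(k)\ge 1/k$), and (ii) the sample leaks only $\approx(k-1)p$ bits about $c$. Property (ii) is precisely why the $k-1$ ``private'' colors are used: in a less structured construction the color-$k$ count would reveal $\Theta(\log k)$ additional bits about $\sum_i c_i$ --- here harmlessly absorbed into the $\log_2(k+1)$ term, but one must verify that this is the only leakage beyond the $(k-1)p$ bits.
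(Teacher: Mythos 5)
Your proposal is correct and follows essentially the same route as the paper: the same prior (first $k-1$ colors i.i.d.\ Bernoulli$(1/2)$ with the last color absorbing the remaining balls), the same $\frac{1}{4k}$-thresholding reduction from TV to Hamming loss, and the same mutual-information/rate--distortion argument with the $(k-1)p + \log_2(k+1)$ bound on $I(c;X)$. The only differences are cosmetic (you genie-aid with the reveal indicators $R_i$ to justify the per-coordinate bound $I(c_i;X_i)\le p$, and you explicitly dispatch the case $\bar d>\tfrac12$), both of which are fine.
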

\begin{proof}
The proof follows the mutual information method that compares the amount of information data provides and the minimum amount of information needed to reconstruct the parameters up to a certain accuracy.
%Consider a collection of $k$ colors $\calX=\{x_1,\ldots,x_k\}$.
Consider the following Bayesian setting of a $k$-ball urn, where $\theta_j\iiddistr	Ber(1/2)$ for $j=1,\ldots,k-1$ and $\theta_k = k-\sum_{j<k} \theta_j$. 
	In other words, each of the first $k-1$ colors either is absent or appear exactly once with equal probability.
	Then for $j\in[k-1]$, the observed $X_j$ is simply the erased version of $\theta_j$ with erasure probability $\bar p$.
	Thus the mutual information (in bits) between the parameters 
	$\theta=(\theta_j: j\in[k-1])$ and the observations $X=(X_j: j\in[k])$
	can be upper bounded as follows:
	\begin{align*}
	 I(\theta; X)
	= & ~ \underbrace{I(\theta; X_{1},\ldots,X_{k-1})}_{=(k-1)p} + \underbrace{I(\theta; X_{k}|X_{1},\ldots,X_{k-1})}_{\leq H(X_{k}) \leq \log_2(1+k)}
	\end{align*}
	where the inequality follows from the fact that $X_{k}$ takes at most $k$ values.
	On the other hand, suppose there exists $\hat\mu=\hat\mu(X)$,
	%from rate-distortion,
	such that 
	$\Expect[\|\mu-\hat\mu\|_{\TV}]\leq \epsilon$.
	Define 
	$\hat \theta_j = \indc{\hat \mu_j > \frac{1}{2k}}$ for $j\in[k-1]$.
	Then 
	$2\|\mu-\hat\mu\| \geq \sum_{i=1}^{k-1} \|\mu_j-\hat\mu_j\| \geq \frac{1}{2k} \sum_{i=1}^{k-1} \indc{\theta_j \neq \hat \theta_j}\|$.
	Thus $\hat\theta$ are close to $\theta$ in Hamming distance: 
	$\sum_{i=1}^{k-1} \pprob{\theta_j \neq \hat \theta_j} \leq 4\epsilon k$.
	By the rate-distortion function of Bernoulli distribution \cite[Chap.~10]{cover},
	their mutual information must be lower bounded by
	$$
	I(\theta; \hat\theta) 	\ge (k-1) \pth{ 1-h\pth{\frac{4\epsilon k}{k-1}}}.$$
	Combined with the data processing inequality $I(\theta; X) \geq I(\theta; \hat\theta)$, the last two displays imply that
	$\epsilon \ge \frac{k-1}{4k} h^{-1}(\bar{p} - \frac{\log_2(k+1)}{k-1}) $ which concludes the proof. 
	%Hence we think that the problem of estimating $\pi$ gets more importance.	
\end{proof} 

\section{Proof of \prettyref{thm:risk-deltatv}}
	\label{app:risk-deltatv}

	\begin{proof}
		We first prove the upper bound by analyzing the minimum distance estimator \prettyref{eq:pihat}. 
		Let $\pi\in\Pi_k\subset\Pi$ denote the true profile. Denote the distribution $\nu\triangleq \pi P$. 
		%Recall from \prettyref{eq:unbiased} that $\Expect[\hat \nu]=\nu$.
		As outlined in \prettyref{sec:mind} and in view of \prettyref{eq:dagger}, the key step is to show that $\hat \nu$ is concentrated around $\nu$ in terms of total variation. To this end, observe that for $m\geq 1$, we have $\Expect[\hat \nu_m]=\nu_m$ from \prettyref{eq:unbiased}. Furthermore, 
		\begin{align}\label{eq:m4}
		k\cdot \Var[\hat\nu_m]={1\over k} \Var[Y_m] = {1\over k} \sum_{j\in \calX} \Var[1\{X_j = m\}] \le {1\over k} \sum_{j\in \calX} \PP[X_j=m] = (\pi P)_m=\nu_m\,. 
		\end{align}
		 Thus
		$\expect{|\hat{\nu}_m-\nu_m|}\leq \sqrt{\nu_m/k}$.
		%\begin{align*}
		%\expect{|\hat{\nu}_m-\nu_m|}\leq \{\Expect |\hat{\nu}_m-\nu_m|^2\}^{1/2} \leq \frac{1}{\sqrt{k}}\sqrt{\pi P(m)}=\frac{1}{\sqrt{k}}\sqrt{\nu_{m}}.
		%\end{align*}
		Summing over $m$ we get
		\begin{align}
		\Expect[\|\hat\nu-\nu\|_{\TV}]\leq
		\expect{\sum_{m=1}^{k}|\hat{\nu}_m-\nu_m|}
		& \leq \frac{1}{\sqrt{k}}\sum_{m=1}^k \sqrt{\nu_m}\nonumber \\
		& \stepa{\le} \frac{1}{\sqrt{k}} \left(\sum_{m=1}^k m \nu_m \right)^{1/2}\left(\sum_{m=1}^k \frac 1m \right)^{1/2} \nonumber \\
		&\stepb{\le}  O\pth{{\sqrt{\log k\over k}}} ,\label{eq:m16}
		\end{align}
		where (a) follows from Cauchy-Schwarz; (b) follows as follows:
		if we denote $U_1\sim \pi$ and $U_2|U_1 \sim \Binom(U_1,p)$, then $U_2\sim \nu$  and hence $\Expect[U_2] = p \Expect[U_1] \leq p$ thanks to the mean constraint on $\pi\in \Pi$.
		Next we show that
		\begin{equation}
		\prob{|\|\nu - \hat \nu\|_{\TV} - \Expect\|\nu - \hat \nu\|_{\TV}| \geq \epsilon } \leq \exp(-C_0 k \epsilon^2)
		\label{eq:tv-concentration}
		\end{equation}
		for some absolute constant $C_0$, all $\epsilon>0$ and $k$ large. For that we aim to show that $\|\nu - \hat \nu\|_{\TV}$ satisfies the bounded difference property and then apply McDiarmid's inequality. 
		Let $x_1,\dots,x_{\tilde k}$ be the distinct colors present in the urn with $\tilde k\leq k$. 
		Denote $\|\nu - \hat \nu\|_{\TV} = d(N_{x_1},\dots, N_{x_{\tilde k}})$ for some function $d$.
		Then $d$ satisfies the following:
		for any $i \in [\tilde k]$ and any $n_1,\ldots,n_{\tilde k}$ with $n_i' \neq n_i$, we have
		\begin{align}
		%&\left|d(N_{x_1},\dots,N_{x_{i-1}},N_{x_i},N_{x_{i+1}},\dots,N_{x_{\tilde k}})-d(N_{x_1},\dots,N_{x_{i-1}},N'_{x_i},N_{x_{i+1}},\dots,N_{x_{\tilde k}})\right|\nonumber \\
		&\left|d(n_1,\dots,n_{i-1},n_i,n_{i+1},\dots,n_{\tilde k})- d(n_1,\dots,n_{i-1},n_i',n_{i+1},\dots,n_{\tilde k})\right|\nonumber \\
		%&\left|d(\bfn)-d(\bfn')\right|\nonumber \\
		\leq & \frac 12\left||\nu_{n_i}-\hat \nu_{n_i}|+|\nu_{n_i'}-\hat \nu_{n_i'}|-\left|\nu_{n_i}-\pth{\hat{\nu}_{n_i}-\frac{1}{k}}\right|-\left|\nu_{n_i'}-\pth{\hat{\nu}_{n_i'}+\frac{1}{k}}\right| \right|\label{eq:m31}\\
		\leq &\frac {1}{k}.\nonumber
		\end{align}
		%where \eqref{eq:m31} follows by noting that
		%\begin{align}
		%\hat \nu_m&=\frac {\text{Total colors with $m$ observations}}{k},\\
		%\hat \nu_0&=1-\frac{\text{Total observed colors}}{k}.
		%\end{align}
		Furthermore, $(N_{x_1},\dots, N_{x_{\tilde k}})$ are independent. 
		Then the desired exponential bound in \prettyref{eq:tv-concentration} follows from McDiarmid's inequality. 
		
		Combining \eqref{eq:m16} and \eqref{eq:tv-concentration} we get
		\begin{equation}
		\prob{\|\nu - \hat \nu\|_{\TV} \geq \sqrt{\frac{C_1 \log k}{k}} } \leq k^{-1}
		\label{eq:tv-concentration1}
		\end{equation}
		for some absolute constant $C_1$. Then taking expectations on both sides of \prettyref{eq:dagger}, for sufficiently large $k$ we get
		\begin{align*}
		\Expect \|\hat{\pi}-\pi \|_{\TV}
		\leq & ~ \Expect[\deltaTV(2 \|\pi P - \hat \nu\|_{\TV})] \\
		\stepa{\leq} & ~ \deltaTV\pth{\sqrt{\frac{C_1 \log k}{k}}} + k^{-1} \\
		\stepb{\leq} & ~ 2\deltaTV\pth{\sqrt{\frac{C_1 \log k}{k}}},
		\end{align*}
		where (a) follows from \prettyref{eq:tv-concentration1} and $\deltaTV \leq 1$, (b) follows from the universal fact that $\deltaTV(t) \geq t$ (\prettyref{rmk:reverseDPI})	and $\delta_{\TV}(t)$ is increasing in $t$. This yields the desired upper bound on $R(k)$.

		To show the lower bound, consider any bounded function $T: \integers_+ \to [-1,1]$. Then for distribution $\pi$ on $\integers_+$, define the linear functional $T_{\pi}$:
		\begin{align*}
		T_{\pi}=\sum_{m} \pi_m T(m).
		\end{align*}
		Note that $2\|\hat \pi - \pi\|_{\TV} = \sup_{T} |T_{\hat \pi}-T_{\pi}|$ for any estimator $\hat \pi$ of $\pi$. Hence the minimax TV risk of estimating $\pi$ can be lower bounded by that of estimating $T$
		\begin{align*}
		R(k)\geq \frac 12 R_T(k), \quad R_T(k) \triangleq  \inf\sup \EE\qth{|\hat T- T_{\pi}|}. 
		\end{align*}
		where the estimator $\hat T$ depends on $(X_j:j\in \calX)$ and the supremum is again over all $k$-ball urns. 
		We are now in position to apply \cite[Theorem 8]{PW18-dual2} (with $\Theta=\integers_+$, $c(\theta)=\theta$,
		and $K_V=1$) to obtain\footnote{The result of \cite[Theorem 8]{PW18-dual2} is stated in terms of the
		$\chi^2$-divergence. The TV version follows by applying~\cite[Proposition 1]{PW18-dual2} to lower bound 
		$\delta_{\chi^2}(t)$ via $\deltaTV(t)$.}
		\[
		R_T(k)  \geq \frac 1{72}\deltaTV\pth{\frac 1{6k}}-{C_2\over \sqrt k}
		\]
		where
		\begin{align}
		\deltaTV(t,T)= \sup\{|T_{\pi'}-T_{\pi}|:  \TV(\pi'P,\pi P)\le t, \pi, \pi'\in\Pi\} \label{eq:deltv_def}
		\end{align}
		Finally optimizing over $T$ observing that $\deltaTV(t)=\sup_{T}\deltaTV(t,T)$ for every $t>0$ yields the result.
		\end{proof}

\section{Proofs of technical lemmas}
\label{app:proofs}

%\begin{proof}[\unskip\nopunct]
	%\noindent
	%\textbf{Proof of \prettyref{lmm:delta*}}: 
\begin{proof}[\prettyref{lmm:delta*}]
	We prove the lemma by showing how a feasible solution of one of the programs can be utilized to get a feasible solution of the other one, and vice-versa. Let us start with the second inequality. Given any pair ($\pi$,$\pi'$) feasible for $\deltaTV(t)$, choose $\Delta=(\pi-\pi')/2$. We get
	\begin{align*}
	\sum_m m|\Delta_m|=\frac{1}{2}\sum_m m|\pi_m-\pi_m'|
	\leq \frac{1}{2}\sum_m m(\pi_m+\pi_m')
	\leq 1.
	\end{align*}
	The relation $\|\Delta P\|_1\leq t$ follows directly from $\|\pi P-\pi'P\|_{\TV}\leq t$. This shows  $\Delta$ is feasible for $\delta_*(t)$ with $\|\Delta\|_1=\|\pi-\pi'\|_{\TV}$. This proves the second inequality in \prettyref{lmm:delta*}.
	
	The first inequality is proven next. Take any non-zero feasible solution $\tilde\Delta$ to $\delta_*(t)$ (which exists
	because we can always choose $\tilde\Delta=0$). Next, suppose that $\epsilon \eqdef \sum_m \tilde \Delta_m \neq 0$. 
	Then, let us define $\Delta_j = \tilde \Delta_j$ for $j\ge 1$ and $\Delta_0 =
	\tilde\Delta_0-\epsilon$. It is clear that 
	\begin{equation}\label{eq:sumd_1}
		\sum_j \Delta_j= 0
	\end{equation}	
	Furthermore, since $\langle \tilde \Delta P,
	\mathbf{1}\rangle = \langle \tilde \Delta, \mathbf{1} \rangle = \epsilon$ we conclude that $|\epsilon| \le
	\|\Delta P\|_1 \le t$. Therefore,
	\begin{equation}\label{eq:sumd_2}
		\sum_j |\Delta_j| \ge \sum_j |\tilde \Delta_j| - t\,. 
	\end{equation}	
	Finally, because $\|rP\|_1 \le \|r\|_1$ we also have from triangle inequality
	\begin{equation}\label{eq:sumd_3}
		\|\Delta P\|_1 \le t + |\epsilon| \le 2t\,.
	\end{equation}	

	Next we define $\Delta^+=\max(\Delta,0)$, $\Delta^-=\max(-\Delta,0)$, where max is defined coordinate wise. We 
	choose $\{\pi_m\}_{m=0}^\infty$ and $\{\pi_m'\}_{m=0}^{\infty}$ as
	\begin{align*}
	\pi_0=1-\sum_{j=1}^{\infty} \Delta^+_j,&\quad \pi_0'=1-\sum_{j=1}^{\infty} \Delta^-_j,\\
	\pi_m=\Delta^+_m, &\quad \pi_m'=\Delta^-_m,\quad m\ge 1
	\end{align*}
	Note that under constraints on $\Delta$, we have $\pi,\pi'\in \Pi$. Indeed, $\sum_{m\ge 1} |\Delta_m| \le \sum_m m
	|\Delta_m| = \sum_m m |\tilde \Delta_m| \le 1$ and thus $\pi_0,\pi_0'\ge 0$. Furthermore, since $|\Delta_m|=\Delta_m^+ + \Delta_m^-$ we have
	$
	\sum_m m(\Delta_m^+ + \Delta_m^-)\leq 1
	$
	which implies 
	$
	\sum_m m(\pi_m + \pi_m')\leq 1.
	$ 
	This proves $\sum_m m\pi_m\leq 1$ and $\sum_m m\pi_m'\leq 1$. Next, observe that $\pi_0 - \pi_0' = \Delta_0$
	due to~\eqref{eq:sumd_1} and thus $\pi-\pi' = \Delta$. From~\eqref{eq:sumd_3} we conclude that $\|(\pi -
	\pi')P\|_{\TV} \le t$ and hence $(\pi,\pi')$ is a feasible pair for $\delta_{\TV}(t)$. And thus
	via~\eqref{eq:sumd_2} we obtain
		$$ \deltaTV(t) \ge {1\over 2} \left(\delta_*(t)-t\right)\,.$$
\end{proof}

\begin{proof}[Proof of \prettyref{lmm:lag}]
		In view of~\eqref{eq:h7} and~\eqref{eq:Plancherel} the proof of~\eqref{eq:h8} is straightforward but
		delicate. To simplify analysis we will assume $\beta \to \infty$ and denote by $o(1)$ the terms
		vanishing with $\beta$.

		For $m\in \pth{\beta,\frac {3\beta}{2}}$ we define $\phi_m=\arccos \sqrt{\beta/(2m)}$ and $\theta_m=F(\phi_m)$ where $F(\phi)=\sin(2\phi)-2\phi$. Here $\phi_m\in(\arccos (1/2),\arccos (1/3))$ and hence is bounded away from both 0 and $\pi/2$ for all $m$ in the above range. Then using \eqref{eq:Plancherel} with $x=2\beta$, we get that there exist absolute constants $\beta_0,C_7$ such that for all $\beta \geq \beta_0$,
		\begin{align}
		|L_m^{(-1)}(2\beta)|&\geq {e^{\beta}\over \sqrt{\pi}\pth{1-\frac 13}^{1/4}}(2\beta)^{1/4}m^{-3/4}\sth{\left| \sin\pth{m\theta_{m}+{3\pi\over 4}}\right|-C_7\beta^{-1}}\nonumber \\
		&\geq {2e^{\beta}\over \sqrt{\pi}(2/3)^{1/4}3^{3/4}}\beta^{-1/2}\sth{\left|\sin\pth{m\theta_{m}+{3\pi\over 4}}\right|-C_7\beta^{-1}}\nonumber \\
		&\geq {e^{\beta}\beta^{-1/2}\over 2}\sth{\left|\sin\pth{m\theta_{m}+{3\pi\over 4}}\right|-C_7\beta^{-1}}.\label{eq:m34}
		\end{align} 
		Now we consider any two consecutive integers $m$ and $m+1$ in $\pth{\beta,\frac {3\beta}{2}}$. Using \eqref{eq:m34} we get
		\begin{align}
		&|L_m^{(-1)}(2\beta)|+|L_{m+1}^{(-1)}(2\beta)|\nonumber \\
		\geq &{e^{\beta}\beta^{-1/2}\over 2}\sth{\left|\sin\pth{m\theta_{m}+{3\pi\over 4}}\right|+\left|\sin\pth{\pth{m+1}\theta_{m+1}+{3\pi\over 4}}\right|-2C_7\beta^{-1}}\label{eq:m23}.
		\end{align}
		The phase difference between the two sine terms comes out to be $m(\theta_{m}-\theta_{m+1})-\theta_m$. Using the formula $\theta_m=F(\phi_m)$, we get
		\begin{align}
		m(\theta_m-\theta_{m+1})=m(\phi_m-\phi_{m+1})  {F(\phi_m)-F(\phi_{m+1})\over \phi_m-\phi_{m+1}}.\label{eq:m35}
		\end{align}
		We will show that the above is bounded away from 0 as $m$ goes to infinity. We first consider the term $m(\phi_m-\phi_{m+1})$. Using $\fracd{}{x}\arccos \sqrt{x}=-\frac 12 {1\over \sqrt{x(1-x)}}$ we deduce that 
		\begin{align*}
		m\pth{\phi_{m}-\phi_{m+1}}&= m\pth{\arccos{\sqrt{{\beta\over 2m}}}-\arccos{\sqrt{{\beta\over 2m+2}}}} \\
		&= m\pth{\arccos{\sqrt{{\beta/2m}}}-\arccos{\sqrt{{\beta/2m}-{{\beta/2m}\over m+1}}}} \\
		&={\beta\over 2m}\cdot {m \over m+1}\cdot{\arccos{\sqrt{{\beta/2m}}}-\arccos{\sqrt{{\beta/2m}-{{\beta/2m}\over(m+1)}}}\over {{\beta/2m}\over(m+1)}} \\
		&=-\frac 12\sqrt{{\beta/2m}\over {1-{\beta/2m}}}+o(1)
		\end{align*}
		where the $o(1)$ term goes to 0 as $m,\beta$ tends to infinity with $\frac{\beta}{2m}\in \pth{\frac 13,\frac 12}$. In view of \eqref{eq:m35} using $F'(\phi)=2\cos(2\phi)-2$ and $\cos^2(\phi_m)=\frac \beta{2m}$
		we get
		\begin{align}
		m(\theta_{m}-\theta_{m+1})&=-\frac 12\sqrt{{\beta/2m}\over {1-{\beta/2m}}}  F'(\phi_m)+o(1)\nonumber \\
		&=-2\sqrt{{\beta/2m}\over {1-{\beta/2m}}}\pth{{\beta\over 2m}-1}+o(1)\nonumber \\
		&={2\sqrt{{\beta\over2m}\pth{1-{\beta\over2m}}}}+o(1)\label{eq:Phase diff}
		\end{align}
		with the same last conditions on $m,\beta$. As $\frac \beta{2m}\in\pth{\frac 13,\frac 12}$ the above quantity is bounded away from 0. Also \eqref{eq:Phase diff} implies that $\theta_{m+1}$ can be approximated as $\theta_m+o(1)$.
		As we have
		\begin{align*}
		\theta_m&=\sin(2\phi_m)-2\phi_m\\
		&=2\sin \phi_m \cos \phi_m -2\phi_m\\
		&=2\sqrt{{\beta\over2m}\pth{1-{\beta\over2m}}}-2\phi_m
		\end{align*}
		continuing \eqref{eq:m23} and using \eqref{eq:Phase diff} we get 
		\begin{align}
		&|L_m^{(-1)}(2\beta)|+|L_{m+1}^{(-1)}(2\beta)|\nonumber \\
		\geq &{e^{\beta}\beta^{-1/2}\over 2}\sth{\left|\sin\pth{m\theta_{m}+{3\pi\over 4}}\right|+\left|\sin\pth{m\theta_{m}+{3\pi\over 4}+\theta_{m}-2\sqrt{{\beta\over 2m}\pth{1-{\beta\over 2m}}}}\right|+o(1)}\nonumber \\
		= &{e^{\beta}\beta^{-1/2}\over 2}\sth{\left|\sin\pth{m\theta_{m}+{3\pi\over 4}}\right|+\left|\sin\pth{m\theta_{m}+{3\pi\over 4}-2\phi_m}\right|+o(1)}.\label{eq:m26}				
		\end{align}
		Now we note that for any real number $a\in (0,\pi)$ the function $s(x)\triangleq |\sin (x)|+|\sin(x-a)|$ has period $\pi$ and is piecewise concave on the intervals $(0,a)$ and $(a,\pi)$. As $s(0)=s(a)=s(\pi)=\sin (a)$ we get
		\begin{align*}
		\inf_{j}\sth{|\sin (x)|+|\sin(x-a)|}=\sin(a).
		\end{align*}
		%\nb{Can you explain how piecewise concavity leads to the desired conclusion? We might not include it in the paper, but at least internally we should get to the bottom of it.}\soham{Explained above.}
		In view of the above, continuing \eqref{eq:m26} we get
		\begin{align*}
		{|L_m^{(-1)}(2\beta)|+|L_{m+1}^{(-1)}(2\beta)|}&\geq{e^{\beta}\beta^{-1/2}\over 2}\sth{\sin\pth{2\phi_m}+o(1)} \\
		&= {e^{\beta}\beta^{-1/2}\over 2}\sth{2\sqrt{{\beta\over 2m}\pth{1-{\beta\over 2m}}}+o(1)} \\
		&\geq {e^{\beta}\beta^{-1/2}\over 2}\pth{\frac {2\sqrt{2}}3+o(1)}
		\end{align*}
		for any $m\in \pth{\beta,\frac{3\beta}{2}}$. In view of~\eqref{eq:h7} this implies~\eqref{eq:h8}.	
\end{proof}

\end{document}